\let\cal=\mathcal
\newtheorem{theo}{Theorem}[section]
\newtheorem{lemma}[theo]{Lemma}
\newcommand{\qed}{\hspace*{\fill} \rule{7pt}{7pt}}
\begin{document}

\title{On a conjecture of Tokushige for cross-$t$-intersecting families\thanks{E-mail addresses: huajunzhang@usx.edu.cn (H. Zhang),
wu@hunnu.edu.cn (B. Wu, corresponding author)}}

\author{
Huajun Zhang $^{\dag}$,
 Biao Wu $^{\ddag}$\\[2ex]
{\small $^{\ddag}$ Department of Mathematics, Shaoxing University} \\
{\small Shaoxing, Zhejiang, 321004, P.R. China }\\
{\small $^{\dag}$ MOE-LCSM,  School of Mathematics and Statistics, Hunan Normal University} \\
{\small Changsha, Hunan, 410081, P.R. China } }

\maketitle

\begin{abstract}
Two families of sets $\mathcal{A}$ and $\mathcal{B}$ are called cross-$t$-intersecting if $|A\cap B|\ge t$ for all $A\in \mathcal{A}$,  $B\in \mathcal{B}$.
An active problem in extremal set theory is to determine the maximum product of sizes of cross-$t$-intersecting families. This incorporates the classical Erd\H{o}s--Ko--Rado (EKR) problem.
In the present paper, we prove
that if $\mathcal{A}$ and $\mathcal{B}$ are cross-$t$-intersecting families of $\binom {[n]}k$ with $k\ge t\ge 3$ and $n\ge (t+1)(k-t+1)$, then $|\mathcal{A}||\mathcal{B}|\le {\binom{n-t}{k-t}}^2$; moreover, if $n>(t+1)(k-t+1)$, then equality holds if and only if $\mathcal{A}=\mathcal{B}$  is a maximum $t$-intersecting subfamily of $\binom{[n]}{k}$. This confirms a conjecture of Tokushige for $t\ge 3$.

\end{abstract}

Key Words: cross-$t$-intersecting, extremal combinatorics.

\section{Introduction}

Let $n$, $k$  and $t$ be positive integers with $t\leq k\leq n$.  We write $[n]:=\{1,\ldots,n\}$ and denote its power set by $2^{[n]}$. A nonempty subset $\mathcal{F}$ of $2^{[n]}$ is called {\it $k$-uniform} if its elements all have size $k$.  We use $\binom{[n]}{k}$
to denote the set of all $k$-element subsets of $[n]$. We call $\mathcal{F}$ a {\it $t$-intersecting family} if $|A\cap B|\ge t$ for all $A,B\in \mathcal{F}$. A $1$-intersecting family is simply called an {\it intersecting} family. The Erd\H{o}s--Ko--Rado Theorem is one of the central results in extremal combinatorics.

\begin{theo}[Erd\H{o}s--Ko--Rado Theorem \cite{EKR1961}]\label{EKR1961}
Let $t,k,n$ be positive integers such that $t\leq k\leq n$. Suppose that $\mathcal F$ is a $t$-intersecting subfamily of $\binom{[n]}{k}$. Then for $n\geq n_0(k,t)$, \[|\mathcal{F}|\leq \binom{n-t}{k-t}. \]
\end{theo}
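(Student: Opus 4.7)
The plan is to use the classical shifting (compression) method, which reduces the problem to analyzing a highly structured canonical family. The argument has three steps.

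First, I would perform a shifting reduction. For $1\le i<j\le n$, define the shift operator $S_{ij}$ on $\mathcal{F}\subseteq\binom{[n]}{k}$ by replacing each $A\in\mathcal{F}$ with $j\in A$, $i\notin A$, and $(A\setminus\{j\})\cup\{i\}\notin\mathcal{F}$ by $(A\setminus\{j\})\cup\{i\}$. A direct check shows $|S_{ij}(\mathcal{F})|=|\mathcal{F}|$ and that $S_{ij}(\mathcal{F})$ remains $t$-intersecting. Since the potential $\sum_{A\in\mathcal{F}}\sum_{x\in A}x$ strictly decreases under any non-trivial shift, the iteration terminates, so I may assume $\mathcal{F}$ is \emph{shifted}: $S_{ij}(\mathcal{F})=\mathcal{F}$ for every $1\le i<j\le n$.

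Second, I would prove the key structural claim: a shifted $t$-intersecting family $\mathcal{F}\subseteq\binom{[n]}{k}$ with $n\ge n_0(k,t)$ must be contained in the star $\mathcal{S}_t:=\{A\in\binom{[n]}{k}:[t]\subseteq A\}$. Suppose for contradiction that some $A\in\mathcal{F}$ has $[t]\not\subseteq A$; pick such an $A$ minimizing $|A\cap[t]|$ and then lexicographically smallest. Choose $i\in[t]\setminus A$ and $j\in A\setminus[t]$. Using the shifted property repeatedly, I would construct a companion $B\in\mathcal{F}$ supported almost entirely on $[t]\cup L$ for a small set $L$ of high indices chosen disjoint from $A\setminus[t]$; for $n$ large enough compared to $k$ and $t$, the only elements available to contribute to $A\cap B$ are the few elements of $A\cap[t]$ (fewer than $t$ by hypothesis) plus a controlled number coming from $L\cap A$, which can be forced to keep $|A\cap B|<t$, contradicting the $t$-intersection property of $\mathcal{F}$.

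Finally, once $\mathcal{F}\subseteq\mathcal{S}_t$, the trivial bound $|\mathcal{F}|\le|\mathcal{S}_t|=\binom{n-t}{k-t}$ closes the argument. The principal obstacle is the second step: quantifying how large $n$ must be so the companion construction yields $|A\cap B|<t$. The cleanest implementation I know proceeds by induction on $k-t$ via link families $\mathcal{F}(x):=\{A\setminus\{x\}:x\in A\in\mathcal{F}\}$, which are $(t-1)$-intersecting or $t$-intersecting depending on how $x$ is chosen, reducing to smaller parameters at the cost of losing a constant in the threshold. Since the statement only requires ``$n\ge n_0(k,t)$'' without demanding optimality, any explicit polynomial threshold (e.g.\ $n_0(k,t)=(k-t+1)(t+1)$, or something cruder coming from the induction) will suffice.
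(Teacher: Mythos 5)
Your Step 1 is fine, but Step 2 --- the claim that a shifted (left-compressed) $t$-intersecting family with $n\ge n_0(k,t)$ must be contained in the star $\mathcal S_t=\{A\in\binom{[n]}{k}:[t]\subseteq A\}$ --- is false, and no threshold on $n$ repairs it. The family $\mathcal F(n,k,t,1)=\{A\in\binom{[n]}{k}:|A\cap[t+2]|\ge t+1\}$, which appears in this very paper, is left-compressed and $t$-intersecting for every $n$, yet it is contained in no $t$-star: it has members avoiding the element $1$ (e.g.\ any $A\supseteq\{2,\dots,t+2\}$), and likewise for every other element, so its members have no common $t$-set. Its size does fall below $\binom{n-t}{k-t}$ once $n>(t+1)(k-t+1)$, but that is precisely what has to be proved and cannot come from a containment that does not hold. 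The same obstruction defeats your companion-set construction: shiftedness only lets you replace elements by smaller indices while staying in the family, and for families like $\mathcal F(n,k,t,1)$ this stalls long before producing a set $B$ supported on $[t]\cup L$ with $|A\cap B|<t$.

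The standard repair is a dichotomy rather than a containment. Either some $t$-set $T$ lies in every member of $\mathcal F$, and then $|\mathcal F|\le\binom{n-t}{k-t}$ trivially; or no such $T$ exists, and then $|\mathcal F|=O_{k,t}(n^{k-t-1})$, which is smaller than $\binom{n-t}{k-t}$ for $n\ge n_0(k,t)$. For the second horn, fix $A_1\in\mathcal F$; every $B\in\mathcal F$ contains some $t$-subset $T\subseteq A_1$, and since $T$ is not common to all of $\mathcal F$ there is $B_T\in\mathcal F$ with $T\not\subseteq B_T$; any member containing $T$ must also meet $B_T$ outside $T$, so at most $k\binom{n-t-1}{k-t-1}$ members contain $T$, and summing over the $\binom{k}{t}$ choices of $T$ gives the order bound. (Shifting is not even needed for this version; if you wish to keep it, the correct structural fact is Frankl's lemma that in a shifted $t$-intersecting family every $A$ satisfies $|A\cap[t+2i]|\ge t+i$ for some $i\ge 0$, not containment in $\mathcal S_t$.) Note also that the paper only cites this theorem from \cite{EKR1961} and does not prove it, and the optimal threshold $(t+1)(k-t+1)$ you invoke is Wilson's theorem, well beyond what the crude count above yields --- but since the statement asks only for some $n_0(k,t)$, that part of your plan is acceptable once the false structural claim is replaced.
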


Let $k,t$ be fixed positive integers such that $t\le k$, and let $N_0(k,t)$ be the smallest possible value of $n_0(k,t)$ in Theorem \ref{EKR1961}. In the case $t=1$, it is shown in \cite{EKR1961} that $N_0(k,1)=2k$. For $t\ge 1$, we have $N_0(k,t)=(t+1)(k-t+1)$, which was established in \cite{Fra1976} for $t\ge 15$ and in \cite{Wil1984} for all $t$.

Let $r$ be an integer such that $0\le r\le \frac {n-t} 2$. We define the following $k$-uniform family
\[
  \mathcal{F}(n, k, t, r):=\left\{A\in \binom{[n]}{k}: |A\cap [t+2r]|\ge t+r\right\}.
\]
It is a $t$-intersecting family, since for its elements $A,B$ we have $|A\cap B|\ge |A\cap [t+2r]|+|B\cap[t+2r]|-(t+2r)\ge t$. Frankl \cite{Fra1976} conjectured that if $\mathcal{F}$ is a $t$-intersecting subfamily of $\binom {[n]}k$, then
\begin{equation}\label{eqn_Fbound}
 |\mathcal{F}|\le \max_{0\le r \le (n-t)/2} |\mathcal{F}(n, k, t, r)|.
\end{equation}
This conjecture was proved partially by Frankl and F\"uredi \cite{FF}, and then settled completely by Ahlswede and Khachatrian \cite{AK}.

\begin{theo}[Ahlswede and Khachatrian \cite{AK}]
Let $n,k,t$ be positive integers such that $t\le k\le n$ and $2k-t+1\le n\le (t+1)(k-t+1)$. Let $\mathcal F$ be a $t$-intersecting $k$-uniform family of subsets of $[n]$.
\begin{itemize}
\item [\rm(i)] If $(k-t+1)(2+\frac{t-1}{r+1})<n<(k-t+1)(2+\frac{t-1}{r})$ for some nonnegative integer $r$, then $|\mathcal F|\leq |\mathcal{F}(n, k, t, r)|$, and  equality holds if and only if $\mathcal{F}=\mathcal{F}(n, k, t, r)$ up to isomorphism.
\item [\rm(ii)]If $(k-t+1)(2+\frac{t-1}{r+1})=n$ for some nonnegative integer $r$, then $|\mathcal F|\leq |\mathcal{F}(n, k, t, r)|=|\mathcal{F}(n, k, t, r+1)|$, and equality holds if and only if $\mathcal{F}=\mathcal{F}(n, k, t, r)$ or $\mathcal{F}(n, k, t, r+1)$ up to isomorphism.
  \end{itemize}
\end{theo}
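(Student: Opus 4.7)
The plan is to prove the theorem by the \emph{shifting + pushing--pulling} method originally introduced by Ahlswede and Khachatrian. Step one is to reduce to shifted (left-compressed) families. For $1\le i<j\le n$, the shift $S_{ij}$ preserves $k$-uniformity, the $t$-intersecting property and the cardinality $|\mathcal{F}|$; iterating until stable yields a shifted $t$-intersecting family of the same size, so I may assume $\mathcal{F}$ itself is shifted.

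Step two is the structural analysis on shifted families. I would inspect the trace of $\mathcal{F}$ on initial segments $[t+2r]$, aiming to identify an $r$ (a ``type'') such that every $F\in\mathcal{F}$ is forced to satisfy $|F\cap[t+2r]|\ge t+r$; this would give $\mathcal{F}\subseteq\mathcal{F}(n,k,t,r)$ and hence the desired upper bound. The mechanism for producing this containment is the pushing--pulling operation: sets $F\in\mathcal{F}$ whose trace on $[t+2r]$ is deficient are replaced by suitable sets with larger trace, chosen so that the modified family remains $t$-intersecting and does not shrink. The analysis of which replacements are possible is governed by shadow-type estimates in the spirit of Kruskal--Katona, and the outcome depends sensitively on where $n$ lies relative to the thresholds $(k-t+1)\bigl(2+\tfrac{t-1}{r+1}\bigr)$.

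Step three is the optimization over $r$. I would compute the ratio
\[
\frac{|\mathcal{F}(n,k,t,r+1)|}{|\mathcal{F}(n,k,t,r)|}
\]
as an explicit expression in sums of binomial coefficients, and solve for when this ratio crosses $1$. This calculation produces exactly the breakpoints $n=(k-t+1)\bigl(2+\tfrac{t-1}{r+1}\bigr)$ appearing in the statement: on each open interval in (i) one value of $r$ strictly dominates, while at each boundary value two consecutive Frankl families tie. The uniqueness in (i) and the two-family characterization in (ii) then follow from whether the pushing--pulling modifications are strict or merely weight-preserving.

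The main obstacle is the pushing--pulling step. Shifting alone leaves a wide landscape of candidate shifted $t$-intersecting families, and one must both correctly identify the right $r$-type for a given $n$ and execute replacements that preserve $t$-intersection globally without losing sets. Making the shadow estimates tight enough to detect the exact threshold $n=(k-t+1)\bigl(2+\tfrac{t-1}{r+1}\bigr)$, and delicate enough to separate strict-inequality regimes from the boundary regime where two families are simultaneously extremal, is the core technical difficulty; it is where the statement acquires its full ``complete intersection theorem'' sharpness, and where any attempt short of the original Ahlswede--Khachatrian argument typically breaks down.
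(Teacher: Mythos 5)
There is a genuine gap: what you have written is a roadmap of the Ahlswede--Khachatrian argument, not a proof of it. (For context, the paper itself does not prove this statement at all; it quotes it from \cite{AK} and only re-uses the underlying machinery of generating sets in its Lemma 2.1, so the only thing to assess is whether your outline would actually yield the theorem.) You correctly identify the ingredients --- reduction to left-compressed families, generating sets with pushing--pulling exchanges, and the size comparison of consecutive Frankl families $\mathcal{F}(n,k,t,r)$ whose tie points are exactly $n=(k-t+1)\bigl(2+\tfrac{t-1}{r+1}\bigr)$ --- but Steps two and three are stated as intentions (``I would inspect the trace\dots'', ``I would compute the ratio\dots''), and you yourself concede that the decisive step, making the exchange and shadow estimates sharp enough to single out the correct $r$ for each $n$, is precisely where ``any attempt short of the original Ahlswede--Khachatrian argument typically breaks down.'' A proof cannot delegate its core step to the theorem it is trying to establish.

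Two concrete inaccuracies in the plan itself. First, the pushing--pulling mechanism does not give the containment $\mathcal{F}\subseteq\mathcal{F}(n,k,t,r)$ you aim for: the exchange operations (in the paper's notation, passing from $\mathcal F$ to $\mathcal F\cup\mathcal D(g^{*}_i(\mathcal F)')\setminus\mathcal D(g^{*}_{\ell+t-i}(\mathcal F))$ as in Lemma 2.1(v)) modify the family, so what one obtains is a chain of $t$-intersecting families of non-decreasing size terminating in some $\mathcal{F}(n,k,t,r)$, i.e.\ an upper bound on $|\mathcal{F}|$, not a set-theoretic inclusion of the original $\mathcal{F}$; indeed a shifted $t$-intersecting family need not lie inside any single Frankl family. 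Second, the uniqueness assertions --- the ``if and only if'' in (i) and the two-family characterization in (ii) --- require a separate stability analysis: one must show that every strict exchange strictly increases the size in the open intervals of (i), handle the boundary case where two consecutive families tie, and then undo the compressions to recover ``up to isomorphism,'' since shifting is not injective on isomorphism classes. None of this is addressed beyond a one-clause remark about replacements being ``strict or merely weight-preserving.'' As it stands, the proposal establishes neither the bound nor the equality characterization.
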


Let $\mathcal{A}$ and $\mathcal{B}$ be two families of subsets of $[n]$. They are called {\it cross-$t$-intersecting} if $|A\cap B|\ge t$ for all $A\in \mathcal{A}$ and $B\in \mathcal{B}$.
In the case $t=1$, we simply say that $\mathcal{A}$ and $\mathcal{B}$ are {\it cross-intersecting}.
The cross-$t$-intersecting property is a natural extension of the $t$-intersecting property, since the two properties coincide in the case $\mathcal{A}=\mathcal{B}$. Let $\mathcal A$ and $\mathcal B$ be cross-$t$-intersecting subfamilies of $\binom{[n]}{k}$.
We say that they are \emph{maximal} if there are no cross-$t$-intersecting families $\mathcal A_1$ and $\mathcal B_1$ of $\binom{[n]}{k}$ such that $\mathcal A\subseteq \mathcal A_1$,  $\mathcal B\subseteq\mathcal B_1$ and $|\mathcal A|+|\mathcal B|<|\mathcal A_1|+|\mathcal B_1|$.

The study of possible maximum sizes of pairs of cross-intersecting families is one of the central problems of extremal set theory.
In 1986, Pyber \cite{Pyber} generalized the Erd\H{o}s--Ko--Rado Theorem to the cross-intersecting case.
\begin{theo} [Pyber \cite{Pyber}] \label{Pyber}
Let $n,k,\ell$ be positive integers such that $\ell\le k\le n$. Suppose that $\mathcal{A}\subseteq \binom {[n]}k$ and $\mathcal{B} \subseteq \binom {[n]}{\ell}$ are two cross-intersecting  families.
\begin{enumerate}
\item[{\rm (1)}] If $n\ge 2k+\ell-2$, then $ |\mathcal{A}||\mathcal{B}|\leq \binom{n-1}{k-1}\binom{n-1}{\ell-1}$;
\item[{\rm (2)}] If $k=\ell$ and $n\ge 2k$ then $ |\mathcal{A}||\mathcal{B}|\leq \binom{n-1}{k-1}^2$.
\end{enumerate}
\end{theo}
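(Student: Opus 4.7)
The plan is to apply the shifting (compression) technique and then proceed by induction on $n$. As a first step, for every $1 \le i < j \le n$ I would apply the $(i,j)$-shift $S_{ij}$ simultaneously to both $\mathcal A$ and $\mathcal B$; these shifts preserve the sizes of the two families and the cross-intersecting property, so we may assume both are left-compressed.

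For part (1), I would induct on $n$, with base case $n = 2k+\ell-2$. Decompose each family by the top element $n$: set $\mathcal A_n = \{A \in \mathcal A : n \in A\}$, $\mathcal A' = \mathcal A \setminus \mathcal A_n$, and denote the trace $\widetilde{\mathcal A}_n = \{A \setminus \{n\} : A \in \mathcal A_n\} \subseteq \binom{[n-1]}{k-1}$; similarly for $\mathcal B$. Expanding
\[
|\mathcal A||\mathcal B| = |\mathcal A'||\mathcal B'| + |\mathcal A'||\widetilde{\mathcal B}_n| + |\widetilde{\mathcal A}_n||\mathcal B'| + |\widetilde{\mathcal A}_n||\widetilde{\mathcal B}_n|,
\]
three of the four pieces inherit cross-intersection directly: $(\mathcal A', \mathcal B')$ with parameters $(k,\ell)$ in $[n-1]$, $(\widetilde{\mathcal A}_n, \mathcal B')$ with parameters $(k-1,\ell)$, and $(\mathcal A', \widetilde{\mathcal B}_n)$ with parameters $(k,\ell-1)$. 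For the diagonal product, I would use shift-closedness: if $A \in \mathcal A_n$ and $B \in \mathcal B_n$ satisfied $A \cap B = \{n\}$, then $|A \cup B| = k+\ell-1$, and provided $n \ge k+\ell+1$ (automatic from $n \ge 2k+\ell-1$ when $k \ge 2$), one may pick $i \in [n-1] \setminus (A \cup B)$; shift-closedness of $\mathcal B$ then forces $B' := (B \setminus \{n\}) \cup \{i\} \in \mathcal B$, yet $A \cap B' = \emptyset$, contradicting cross-intersection. Hence $\widetilde{\mathcal A}_n$ and $\widetilde{\mathcal B}_n$ are cross-intersecting with parameters $(k-1,\ell-1)$. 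Applying the inductive hypothesis to all four pieces and combining via Pascal's identity
\[
\binom{n-1}{k-1}\binom{n-1}{\ell-1} = \Bigl(\binom{n-2}{k-1}+\binom{n-2}{k-2}\Bigr)\Bigl(\binom{n-2}{\ell-1}+\binom{n-2}{\ell-2}\Bigr)
\]
yields the desired bound. The base case $n = 2k+\ell-2$ has to be verified directly, for instance through a Katona-style cyclic permutation averaging over arcs.

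For part (2), where $k = \ell$ and the hypothesis is only $n \ge 2k$, the induction above closes only for $n \ge 3k-2$; the range $2k \le n \le 3k-3$ then requires a separate argument. In that range, after compression, I would analyze the support structure: if both $\mathcal A$ and $\mathcal B$ are contained in the star at $1$ then $|\mathcal A|,|\mathcal B| \le \binom{n-1}{k-1}$ and we are done; otherwise some set in $\mathcal A$ avoids $1$, and the compressed structure combined with the EKR theorem (applied at $t=1$) constrains $\mathcal B$ to live in a prescribed sub-structure, yielding the product bound via a direct bilinear estimate.

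The principal obstacles are twofold. First, establishing cross-intersection of the diagonal traces $\widetilde{\mathcal A}_n$ and $\widetilde{\mathcal B}_n$ in the inductive step requires the hypothesis $n \ge k+\ell+1$, which is why the base case $n = 2k+\ell-2$ must be treated separately. Second, closing the tight range $2k \le n \le 3k-3$ in part (2) demands a structural rather than inductive argument, and isolating the right multiplicative estimate there is the subtler half of the proof.
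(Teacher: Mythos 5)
The paper does not prove this statement at all: Theorem~\ref{Pyber} is quoted verbatim from Pyber's 1986 paper \cite{Pyber} as background (and is in any case superseded by Theorem~\ref{theoMT}), so there is no internal proof to compare against; your proposal has to be judged on its own. Its skeleton is fine as far as it goes: compressing both families, splitting by the top element $n$, noting that three of the four pairs are automatically cross-intersecting, and using left-compressedness to show that $\widetilde{\mathcal A}_n$ and $\widetilde{\mathcal B}_n$ are cross-intersecting (the replacement $B'=(B\setminus\{n\})\cup\{i\}$ argument is correct, and $n\ge k+\ell+1$ does follow from $n\ge 2k+\ell-1$ for $k\ge2$), after which Pascal's identity assembles the bound. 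This reduces part (1) to the boundary case $n=2k+\ell-2$.

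The genuine gaps are exactly the parts you defer. First, the base case $n=2k+\ell-2$ is not an afterthought: a Katona-type cyclic averaging controls \emph{sums} of sizes (it is linear in the families), and there is no routine way to convert an averaged sum bound into the product bound $\binom{n-1}{k-1}\binom{n-1}{\ell-1}$ when the two families can have very different sizes; as sketched, this step does not close, and it is where the real work of the theorem lives. Second, for part (2) the entire range $2k\le n\le 3k-3$ is left to an unspecified ``direct bilinear estimate''; the statement ``otherwise some set in $\mathcal A$ avoids $1$, and EKR constrains $\mathcal B$'' is not an argument, and this range is precisely the hard content of Pyber's result (only the single value $n=2k$ is easy, since there $A\cap B=\emptyset$ forces $B=[n]\setminus A$, giving $|\mathcal B|\le\binom{n}{k}-|\mathcal A|$ and hence $|\mathcal A||\mathcal B|\le\bigl(\tfrac12\binom{2k}{k}\bigr)^2=\binom{2k-1}{k-1}^2$ by AM--GM; your sketch does not even exploit this). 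So what you have is a correct reduction of the easy range to the tight cases, not a proof of the theorem.
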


 For $k>\ell$, the lower bound on $n$ in (1) of Theorem \ref{Pyber} turns out to be not best possible. In 1989, Matsumoto and Tokushige \cite{MT1989} obtained the following sharper result.

\begin{theo}[Matsumoto and Tokushige \cite{MT1989}] \label{theoMT}
Let $n,k,\ell$ be positive integers such that $n\ge 2k\ge 2\ell$.
If $\mathcal{A}\subseteq {\binom{[n]}{k}}$ and $\mathcal{B}\subseteq {\binom{[n]}{\ell}}$ are cross-intersecting, then
$ |\mathcal{A}||\mathcal{B}|\leq {\binom{n-1}{k-1}}{\binom{n-1}{\ell-1}}$. Moreover, equality holds if and only if $\mathcal{A}=\{A\in {\binom{[n]}{k}}: x\in A\}$ and $\mathcal{B}=\{B\in \binom{[n]}{\ell}: x\in B\}$ for some fixed element $x\in[n]$.
\end{theo}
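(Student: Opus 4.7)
The plan is to prove Theorem \ref{theoMT} by the classical shifting-plus-induction approach. The first step is Frankl's simultaneous $(i,j)$-compression, applied to $\mathcal{A}$ and $\mathcal{B}$ together: for $1\le i<j\le n$, replace every $A\in\mathcal{A}$ with $j\in A$, $i\notin A$ and $(A\setminus\{j\})\cup\{i\}\notin\mathcal{A}$ by $(A\setminus\{j\})\cup\{i\}$, and perform the same operation on $\mathcal{B}$. A routine verification (going back to Frankl) shows that this simultaneous compression preserves the cross-intersecting property and keeps both $|\mathcal{A}|$ and $|\mathcal{B}|$ unchanged. Iterating over all pairs $(i,j)$ until stable, I may therefore assume both $\mathcal{A}$ and $\mathcal{B}$ are shifted.

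With shiftedness in hand, I would induct on $n$. Partitioning each family by whether it contains the element $1$, write $\mathcal{A}_1=\{A\in\mathcal{A}:1\in A\}$, $\mathcal{A}_{\bar 1}=\mathcal{A}\setminus\mathcal{A}_1$, and $\mathcal{A}_1^{\ast}=\{A\setminus\{1\}:A\in\mathcal{A}_1\}\subseteq\binom{[n]\setminus\{1\}}{k-1}$; define $\mathcal{B}_1,\mathcal{B}_{\bar 1},\mathcal{B}_1^{\ast}$ analogously. A direct check from the cross-intersecting hypothesis shows that $(\mathcal{A}_{\bar 1},\mathcal{B}_{\bar 1})$, $(\mathcal{A}_{\bar 1},\mathcal{B}_1^{\ast})$, and $(\mathcal{A}_1^{\ast},\mathcal{B}_{\bar 1})$ are all cross-intersecting on the $(n-1)$-element ground set $[n]\setminus\{1\}$. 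Writing
\[
|\mathcal{A}|\,|\mathcal{B}|=\bigl(|\mathcal{A}_1^{\ast}|+|\mathcal{A}_{\bar 1}|\bigr)\bigl(|\mathcal{B}_1^{\ast}|+|\mathcal{B}_{\bar 1}|\bigr),
\]
one bounds $|\mathcal{A}_1^{\ast}||\mathcal{B}_1^{\ast}|$ by the trivial estimate $\binom{n-2}{k-1}\binom{n-2}{\ell-1}$ and bounds each of the three mixed products via the inductive hypothesis applied with smaller parameters. Combining these estimates with Pascal's identity $\binom{n-1}{k-1}=\binom{n-2}{k-1}+\binom{n-2}{k-2}$ should yield the target bound $|\mathcal{A}||\mathcal{B}|\le\binom{n-1}{k-1}\binom{n-1}{\ell-1}$.

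The main obstacle I foresee lies in the base case together with the equality characterization. The inductive step requires $n-1\ge 2k\ge 2\ell$ on the reduced ground set, so the boundary case $n=2k$ halts the recursion and must be handled by hand; for $k=\ell$ it is contained in Theorem \ref{Pyber}(2), but for $k>\ell$ a separate argument (e.g.\ using the Kruskal--Katona theorem on the shifted $\mathcal{A}$ to force its non-neighbourhood to be small, then bounding $\mathcal{B}$ directly) is required. Equally delicate is pinning down the equality case: one must show that every nontrivial compression strictly decreases the product on extremal pairs, classify the shifted extremal pairs as precisely the common star $\mathcal{A}=\{A:x\in A\}$, $\mathcal{B}=\{B:x\in B\}$, and then reverse-engineer the original pair from its shifted form. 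Threading the equality analysis through each of the three subproduct inequalities and through the iterated compression is the truly technical part of the proof.
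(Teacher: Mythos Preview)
The paper does not prove Theorem~\ref{theoMT}; it is quoted from Matsumoto and Tokushige \cite{MT1989} as background and is never used in the proof of the main result (Theorem~\ref{maintheo}). So there is no ``paper's own proof'' to compare your proposal against.

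For what it is worth, your outline is close in spirit to the original Matsumoto--Tokushige argument: they too reduce to shifted families and split according to containment of a fixed element, combining the pieces with Pascal's identity. The genuine work in their paper, as you correctly anticipate, is the boundary case $n=2k$ and the equality analysis; they handle these with a Kruskal--Katona style argument rather than a bare induction. Your sketch is therefore a reasonable plan, but be aware that the inductive combination you describe, bounding the four cross-products separately and adding, does not quite close: the sum $\binom{n-2}{k-1}\binom{n-2}{\ell-1}+\binom{n-2}{k-2}\binom{n-2}{\ell-1}+\binom{n-2}{k-1}\binom{n-2}{\ell-2}+\binom{n-2}{k-2}\binom{n-2}{\ell-2}$ equals $\binom{n-1}{k-1}\binom{n-1}{\ell-1}$ exactly, so any slack in the ``trivial'' bound on $|\mathcal{A}_1^{\ast}||\mathcal{B}_1^{\ast}|$ is needed to absorb the failure of the other three bounds to be tight simultaneously. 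A cleaner route is to bound $|\mathcal{A}|$ and $|\mathcal{B}|$ individually via Kruskal--Katona (Hilton's cross-intersecting lemma), which is closer to what \cite{MT1989} actually does.
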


In 2013, Tokushige \cite{Tokushige2013} proved the following analogous result for cross-$t$-intersecting families.

\begin{theo} [Tokushige \cite{Tokushige2013}] \label{theoTokushige}
Let $n,k,t$ be positive integers such that $t\le k\le n$ and $\frac{k}{n}<1-\frac{1}{\sqrt[t]{2}}$. If $\mathcal{A}\subseteq {\binom{[n]}{k}}$ and $\mathcal{B}\subseteq {\binom{[n]}{k}}$ are cross-$t$-intersecting, then $ |\mathcal{A}||\mathcal{B}|\leq \binom{n-t}{ k-t}^2.$ Moreover, equality holds if and only if $\mathcal{A}=\mathcal{B}=\{F\in {\binom{[n]}{k}}: T\subset F\}$ for some $t$-subset $T$ of $[n]$.
\end{theo}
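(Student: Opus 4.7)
My plan combines shifting with a weighted induction (equivalently, a biased-measure / generating-function argument). First, apply Frankl's $(i,j)$-shift simultaneously to $\mathcal{A}$ and $\mathcal{B}$ for every pair $1\le i<j\le n$. Shifting preserves $|\mathcal{A}|$ and $|\mathcal{B}|$, and also the cross-$t$-intersecting property: if replacing $j$ by $i$ in some $A\in\mathcal{A}$ threatened to break the intersection with a $B\in\mathcal{B}$, then $i\notin B$ and $j\in B$, and that is exactly the configuration on which the simultaneous shift of $\mathcal{B}$ acts. We may therefore assume both $\mathcal{A}$ and $\mathcal{B}$ are shifted; such pairs enjoy useful structural properties (e.g.\ every member hits the initial segment $[2k-t]$ in at least $t$ points), which is the starting point for the quantitative bound.

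Second, set up a weighted induction on $n$ with $k$ and $t$ fixed. Decompose each family according to whether $n$ belongs to the set: write $\mathcal{A}=\mathcal{A}_0\sqcup\{A\cup\{n\}:A\in\mathcal{A}_1\}$ with $\mathcal{A}_0\subseteq\binom{[n-1]}{k}$ and $\mathcal{A}_1\subseteq\binom{[n-1]}{k-1}$, and similarly for $\mathcal{B}$. The cross-$t$-intersecting property forces $(\mathcal{A}_0,\mathcal{B}_0)$ to be cross-$t$-intersecting in $\binom{[n-1]}{k}$, the mixed pairs $(\mathcal{A}_0,\mathcal{B}_1)$ and $(\mathcal{A}_1,\mathcal{B}_0)$ to be cross-$t$-intersecting across $\binom{[n-1]}{k}\times\binom{[n-1]}{k-1}$, and $(\mathcal{A}_1,\mathcal{B}_1)$ to be cross-$(t-1)$-intersecting in $\binom{[n-1]}{k-1}$. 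Apply the inductive hypothesis (formulated in enough generality to cover these mixed uniformities) to each sub-problem and combine via Cauchy--Schwarz on $|\mathcal{A}||\mathcal{B}|=(|\mathcal{A}_0|+|\mathcal{A}_1|)(|\mathcal{B}_0|+|\mathcal{B}_1|)$.

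The hypothesis $k/n<1-2^{-1/t}$, equivalently $(1-k/n)^t>\tfrac{1}{2}$, enters precisely when closing the induction: the sub-bounds must combine to yield $\binom{n-t}{k-t}^2$, and the numerical inequality powering the combination holds in the correct direction only when $(1-p)^t>\tfrac{1}{2}$ with $p=k/n$. Without this, the extremal configurations shift away from the $t$-star (as in Ahlswede--Khachatrian) and the bound fails. I expect the main obstacle to be exactly this step: getting the recursion to close with the sharp constant $\binom{n-t}{k-t}^2$ rather than just asymptotically, and handling base cases where $k-t$ or $n-2k$ become small. For uniqueness, equality in Cauchy--Schwarz forces $|\mathcal{A}|=|\mathcal{B}|$, while equality in every inductive step, combined with the strict slack in $(1-p)^t>\tfrac{1}{2}$, forces $\mathcal{A}=\mathcal{B}=\{F\in\binom{[n]}{k}:T\subseteq F\}$ for some $t$-subset $T\subseteq[n]$.
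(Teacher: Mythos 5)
This statement is not proved in the paper at all: it is Tokushige's theorem, imported from \cite{Tokushige2013}, where it is established by the eigenvalue (spectral) method applied to the $p$-biased measure, not by shifting and induction. So your route is in any case different from the actual source — and, more importantly, as written it has a genuine gap at exactly the point you flag. After splitting $\mathcal{A}=\mathcal{A}_0\sqcup\mathcal{A}_1$, $\mathcal{B}=\mathcal{B}_0\sqcup\mathcal{B}_1$ according to the element $n$, the product expands as $|\mathcal{A}_0||\mathcal{B}_0|+|\mathcal{A}_0||\mathcal{B}_1|+|\mathcal{A}_1||\mathcal{B}_0|+|\mathcal{A}_1||\mathcal{B}_1|$, and the last term alone is a cross-$(t-1)$-intersecting pair in $\binom{[n-1]}{k-1}$, so the inductive bound you would invoke for it is $\binom{(n-1)-(t-1)}{(k-1)-(t-1)}^2=\binom{n-t}{k-t}^2$ — already the entire target. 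No Cauchy--Schwarz rearrangement of $(|\mathcal{A}_0|+|\mathcal{A}_1|)(|\mathcal{B}_0|+|\mathcal{B}_1|)$ of the kind you describe can absorb the other three nonnegative terms, so the recursion does not close; you would need a genuinely different inductive quantity (this is precisely why Tokushige resorts to a Hoffman-type eigenvalue bound for the biased measure, and why the present paper needs the Ahlswede--Khachatrian generating-set machinery of Sections 2--4 to reach even the range $n\ge(t+1)(k-t+1)$ for $t\ge 3$). The role of the hypothesis $k/n<1-2^{-1/t}$, i.e.\ $(1-k/n)^t>\tfrac12$, is only asserted, never exhibited as a concrete inequality in your combination step.

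Two further unsupported ingredients compound the gap. First, the mixed terms require a sharp product bound for cross-$t$-intersecting pairs of \emph{different} uniformities $\binom{[n-1]}{k}\times\binom{[n-1]}{k-1}$; for $t\ge 2$ no such result is available to quote (Theorems \ref{Pyber} and \ref{theoMT} cover only $t=1$), so ``formulated in enough generality'' hides an induction hypothesis that is itself at least as hard as the theorem. Second, the uniqueness claim cannot follow from ``equality in Cauchy--Schwarz'' as stated, since the inequality chain you propose is not tight at the extremal pair (the star has nonempty $\mathcal{A}_1,\mathcal{B}_1$ parts, so the overshoot described above is strict there), which again signals that the decomposition is not the right one. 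To salvage the plan you would have to replace the crude term-by-term induction with either the $p$-biased eigenvalue argument of \cite{Tokushige2013} or a compression-plus-generating-set analysis in the spirit of Sections 3--4 of this paper.
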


In the same paper \cite{Tokushige2013}, Tokushige conjectured that the lower bound on $n$ in Theorem \ref{theoTokushige} can be improved to $n \ge (t+1)(k-t+1)$. In 2014, Frankl, Lee,  Siggers and Tokushige \cite{FLST} verified the conjecture under the stronger assumptions that $t\ge 14$ and $n \ge (t+1)k$. In the same year, Borg \cite{Borg2014} independently confirmed the conjecture for large $n$.
 In 2016, Borg \cite{Borg2016} presented a analogous result for two more general families, yielding Tokushige's conjecture for general $t$, but for a smaller range of $n$.
 In the present paper, we confirm Tokushige's conjecture for all $t\ge 3$. The following is our main result.

\begin{theo} \label{maintheo}
Let $n,k,t$ be positive integers such that $3\le t\le k\le n$ and $n \ge (t+1)(k-t+1)$.
If $\mathcal{A}\subseteq {\binom{[n]}{k}}$ and $\mathcal{B}\subseteq {\binom{[n]}{k}}$ are cross-$t$-intersecting, then $ |\mathcal{A}||\mathcal{B}|\leq \binom{n-t}{ k-t}^2.$ Moreover, equality holds if and only if $\mathcal{A}=\mathcal{B}$ is a maximum $t$-intersecting subfamily of $\binom{[n]}{k}$.
\end{theo}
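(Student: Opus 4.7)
The plan is to reduce the cross-$t$-intersecting product bound to the Ahlswede--Khachatrian Theorem via simultaneous shifting and an induction on $n$. I first apply the standard shifting operators $S_{ij}$ ($1\le i<j\le n$) simultaneously to $\mathcal{A}$ and $\mathcal{B}$; this preserves both sizes and the cross-$t$-intersecting property, so I may assume $\mathcal{A}$ and $\mathcal{B}$ are both left-compressed and that the pair $(\mathcal{A},\mathcal{B})$ is maximal. Shiftedness then forces $[k]\in\mathcal{A}\cap\mathcal{B}$ and consequently $|F\cap[k]|\ge t$ for every $F\in\mathcal{A}\cup\mathcal{B}$, giving an initial structural handle.

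For the inequality itself I split each family according to the membership of $n$: for $i\in\{0,1\}$, let $\mathcal{A}_i=\{A\setminus\{n\}:A\in\mathcal{A},\,|A\cap\{n\}|=i\}$, so that $\mathcal{A}_0\subseteq\binom{[n-1]}{k}$ and $\mathcal{A}_1\subseteq\binom{[n-1]}{k-1}$, and define $\mathcal{B}_0,\mathcal{B}_1$ analogously. The cross-$t$-intersecting condition unfolds into four constraints: $(\mathcal{A}_0,\mathcal{B}_0)$ is cross-$t$-intersecting in $\binom{[n-1]}{k}$; $(\mathcal{A}_0,\mathcal{B}_1)$ and $(\mathcal{A}_1,\mathcal{B}_0)$ are cross-$t$-intersecting across mixed uniformities $(k,k-1)$ in $[n-1]$; and $(\mathcal{A}_1,\mathcal{B}_1)$ is cross-$(t-1)$-intersecting in $\binom{[n-1]}{k-1}$. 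Writing $|\mathcal{A}||\mathcal{B}|=(|\mathcal{A}_0|+|\mathcal{A}_1|)(|\mathcal{B}_0|+|\mathcal{B}_1|)$ and using Pascal's identity $\binom{n-t}{k-t}=\binom{n-t-1}{k-t}+\binom{n-t-1}{k-t-1}$, I would bound $|\mathcal{A}_0||\mathcal{B}_0|$ by the induction hypothesis, the cross terms $|\mathcal{A}_0||\mathcal{B}_1|$ and $|\mathcal{A}_1||\mathcal{B}_0|$ by a Matsumoto--Tokushige-type inequality (Theorem~\ref{theoMT}) adapted to the $t$-intersecting regime, and $|\mathcal{A}_1||\mathcal{B}_1|$ by a finer argument that uses the extra constraints linking $\mathcal{A}_1,\mathcal{B}_1$ back to $\mathcal{A}_0,\mathcal{B}_0$ rather than the cross-$(t-1)$-intersection in isolation.

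For the uniqueness statement when $n>(t+1)(k-t+1)$, the equality case of the Ahlswede--Khachatrian Theorem is invoked at each recursive step to force each of $\mathcal{A}_0,\mathcal{A}_1,\mathcal{B}_0,\mathcal{B}_1$ to be a trivial star around a common $t$-set, yielding $\mathcal{A}=\mathcal{B}=\{F\in\binom{[n]}{k}:T\subseteq F\}$. The main obstacle is the bound on $|\mathcal{A}_1||\mathcal{B}_1|$: the bare cross-$(t-1)$-intersecting hypothesis gives exactly $\binom{n-t}{k-t}^2$ (the quantity we wish to prove) as a bound, and so carries no useful information on its own. The resolution must exploit the cross-$t$ constraints $(\mathcal{A}_1,\mathcal{B}_0)$ and $(\mathcal{A}_0,\mathcal{B}_1)$ to squeeze $|\mathcal{A}_1||\mathcal{B}_1|$ down to roughly $\binom{n-t-1}{k-t-1}^2$, effectively transferring $t$-intersecting rigidity from the containing family to the link. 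This is precisely where the assumption $t\ge 3$ enters, since it guarantees the induced parameter $t-1\ge 2$ still permits the Ahlswede--Khachatrian extremal structure to be rigid enough for the induction to close; the boundary $n=(t+1)(k-t+1)$ is excluded from the uniqueness claim because at that threshold the extremal $t$-intersecting family is itself non-unique.
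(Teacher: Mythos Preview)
Your plan diverges substantially from the paper's proof and, as written, does not close. Two steps are the problem. First, the cross-term bound is circular: you propose to control $|\mathcal{A}_0||\mathcal{B}_1|$ and $|\mathcal{A}_1||\mathcal{B}_0|$ by ``a Matsumoto--Tokushige-type inequality (Theorem~\ref{theoMT}) adapted to the $t$-intersecting regime,'' i.e.\ a product bound $|\mathcal{A}_0||\mathcal{B}_1|\le\binom{n-1-t}{k-t}\binom{n-1-t}{k-1-t}$ for cross-$t$-intersecting families in $\binom{[n-1]}{k}\times\binom{[n-1]}{k-1}$ valid down to $n-1\ge(t+1)(k-t+1)$. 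No such result is available; Theorem~\ref{theoMT} is the case $t=1$ only, and the mixed-uniformity $t\ge 2$ statement you need is at least as hard as Theorem~\ref{maintheo} itself (which is its equal-uniformity special case). Second, for $|\mathcal{A}_1||\mathcal{B}_1|$ you correctly note that cross-$(t-1)$-intersection alone yields only $\binom{n-t}{k-t}^2$, and your proposal to ``transfer $t$-intersecting rigidity from the containing family to the link'' is not an argument---no mechanism is given. Without both pieces, the four-term decomposition $(a_0+a_1)(b_0+b_1)\le(\alpha+\beta)^2$ cannot be assembled. Your explanation of where $t\ge 3$ enters (rigidity of the $(t-1)$-problem) is also off the mark.

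The paper's route is entirely different and avoids induction on $n$. After shifting, it invokes the Ahlswede--Khachatrian \emph{generating set} machinery: one takes minimal generating sets $g(\mathcal{A}),g(\mathcal{B})$, sets $s=\max\{s^+(g(\mathcal{A})),s^+(g(\mathcal{B}))\}$, and for the least $i$ with $g^*_i(\mathcal{A})\ne\emptyset$ constructs two modified cross-$t$-intersecting pairs by adding/deleting the layers $\mathcal{D}(g^*_i(\mathcal{A})')$ and $\mathcal{D}(g^*_{s+t-i}(\mathcal{B}))$ (Lemma~\ref{lemmag1}). Maximality of $|\mathcal{A}||\mathcal{B}|$ gives two inequalities which, combined with the normalized matching property (Theorem~\ref{nmm}), reduce to a single explicit inequality in $n,k,s,i,t$ (Theorem~\ref{lemmakeyineq}); the bulk of the paper is a direct verification that this inequality is \emph{violated} whenever $s\ge t+3$ and $(s,i,t)\ne(6,4,3)$, with the small cases $s\le t+2$ and $(s,i,t)=(6,4,3)$ handled ad hoc. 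The hypothesis $t\ge 3$ is used only inside these elementary estimates (Lemmas~\ref{lem_prep1}--\ref{lem_prep4}).
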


The methods used in this paper are the shift operator method \cite{EKR1961} and the generating set method \cite{AK}.
They can be used to treat some other extremal set problems, such as determining the maximum (product of) sizes of (cross-)intersecting multi-part families, and the maximum sum of sizes of cross-intersecting families.
Their definitions and basic properties will be given in the next section. In Section 3, we will give an essential inequality. In section 4, we will give  the complete proof of our main result.

\section{Preliminaries}

Let $\mathcal A$ be a family consisting of $k$-element subsets of $[n]$.  For $i,j\in [n]$  and $A\in\mathcal A$, define
\[
  \delta_{ij}(A)=\begin{cases}(A\setminus\{j\})\cup\{i\}, & \hbox{if $j\in A,\ i\not\in A, (A\setminus\{j\})\cup\{i\}\not\in \mathcal{A}$;}\\A, & \hbox{otherwise,}\end{cases}
\]
and set $\Delta_{ij}(\mathcal A)=\{\delta_{ij}(A):A\in\mathcal A\}$ correspondingly. The procedure to obtain $\Delta_{ij}(\mathcal A)$ from $\mathcal A$ is called the shift operation, and was first introduced in
\cite{EKR1961} (see also \cite{Fra1987}). We observe that $\Delta_{ij}(\mathcal A)$ has the same cardinality as $\mathcal{A}$ and is also $k$-uniform. We say that $\mathcal A$ is \emph{left-compressed} if $\Delta_{ij}(\mathcal A)=\mathcal A$ for all $(i,j)$ pairs such that $1\leq i<j\leq n$. It is well known that if $\mathcal A$  and $\mathcal B$ are cross-$t$-intersecting subfamilies of $\binom{[n]}k$, then so are $\Delta_{ij}(\mathcal A)$ and $\Delta_{ij}(\mathcal B)$.

For a subset $E$ of $[n]$, we set $\mathcal{U}(E)=\{A\subseteq [n]: E\subseteq A \}$ and $s^+(E)=\max\{i: i\in E\}$. For $\mathcal{E}\subseteq 2^{[n]}$, we set $\mathcal{U}(\mathcal{E})=\cup_{E\in \mathcal{E}}\mathcal{U}(E)$ and $s^+(\mathcal E)=\max\{s^+(E): E\in\mathcal E\}$ correspondingly. Let $\mathcal{F}$ be a family of $k$-element subsets of $[n]$. We say that a family  $g(\mathcal{F})\subseteq \bigcup_{i\leq k}\binom{[n]}{i}$   is a \emph{generating set} of $\mathcal{F}$ if $\mathcal{U}(g(\mathcal F))\cap  \binom{[n]}{k}=\mathcal{F}$. It is clear that $\mathcal{F}$ is a generating set of itself. The set of all generating sets of $\mathcal{F}$ forms a nonempty set which we denote by $G(\mathcal{F})$. The notion of generating sets of a $k$-uniform family was first introduced in \cite{AK}. For $g(\mathcal F)\in G(\mathcal F)$, let $g_*(\mathcal F)$ be the set of all minimal (in the sense of
set-theoretical inclusion) elements of $g(\mathcal F)$. Set $G_*(\mathcal F)=\{g(\mathcal F)\in G(\mathcal F): g(\mathcal F)=g_*(\mathcal F)\}$. Notice that $\mathcal F\in G_*(\mathcal F)$. For $g(\mathcal F)\in G_*(\mathcal F)$, let $\ell=s^+(g(\mathcal F))$,  set $g^*(\mathcal F)=\{E\in g(\mathcal F): \ell\in E\}$, $g^*_i(\mathcal F)=\{E\in g^*(\mathcal F): |E|=i\}$  and $g^*_i(\mathcal F)'=\{E\setminus\{\ell\} : E\in g^*_i(\mathcal F)\}$ for $1\leq i\leq \ell$. From \cite{AK}, we know that the generating sets have the following properties.

\begin{lemma}\label{gtf} Let $\mathcal F$ be a left-compressed $t$-intersecting subfamily of $\binom{[n]}{k}$, $g(\mathcal F)\in G_*(\mathcal F)$ and  $\ell=s^+(g(\mathcal F))$. Then the following statements hold.

  {\rm (i)} If $n>2k-t$, then $|E_1\cap E_2|\geq t$ for all $E_1,E_2\in g(\mathcal F)$.

  {\rm (ii)} For $1\leq i<j\leq \ell$  and $E\in g(\mathcal F)$, we have either $\delta_{ij}(E)\in g(\mathcal F)$ or $F\subset \delta_{ij}(E)$ for some $F\in g(\mathcal F)$.

  {\rm(iii)} $\mathcal F$ is a disjoint union \[\mathcal F=\bigcup_{E\in g(\mathcal F)}\mathcal D(E), \mbox{ where }
  \mathcal D(E)=\left\{B\in\binom{[n]}{k}: B\cap [s^+(E)]=E\right\}.\]

  {\rm(iv)} If $\mathcal F$ is maximal, then for any $E_1, E_2\in g^*(\mathcal F)$ with $|E_1\cap E_2|=t$, necessarily $|E_1|+|E_2|=\ell+t$ and $E_1\cup E_2=[\ell]$. Furthermore, if $g^*_i(\mathcal F)\neq \emptyset$, then $g^*_{\ell+t-i}(\mathcal F)\neq \emptyset$ and for any $E_1\in g^*_i(\mathcal F)$, there exists $E_2\in g^*_{\ell+t-i}(\mathcal F)$ with $|E_1\cap E_2|=t$ and $E_1\cup E_2=[\ell]$.

{\rm(v)} If  $g^*_i(\mathcal F)\neq\emptyset$, then $\mathcal F_1=\mathcal F\cup\mathcal D(g^{*}_i(\mathcal F)')\backslash \mathcal D(g^{*}_{\ell+t-i}(\mathcal F))$ is also a $t$-intersecting subfamily of $\binom{[n]}{k}$ with
$$|\mathcal F_1|=|\mathcal F|+|g^{*}_i(\mathcal F)|\binom{n-\ell}{k-i+1}-|g^{*}_{\ell+t-i}(\mathcal F)|\binom{n-\ell}{k+i-\ell-t}.$$
 \end{lemma}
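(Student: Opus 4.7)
The plan is to verify (i)--(v) by exploiting the interplay between the minimality of $g(\mathcal{F})$, the left-compressed structure of $\mathcal{F}$, and the $t$-intersecting hypothesis. I would treat $g(\mathcal{F})$ as an antichain generating $\mathcal{F}$ from below via up-closures intersected with $\binom{[n]}{k}$, and reduce each statement to a direct verification on small representative elements.

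For (i), fix $E_1,E_2\in g(\mathcal{F})$ and attempt to build $k$-sets $A_i\supseteq E_i$ in $\mathcal{F}$ with $A_1\cap A_2=E_1\cap E_2$ by greedily filling $A_1$ from $[n]\setminus(E_1\cup E_2)$ and then $A_2$ from $[n]\setminus(E_1\cup E_2\cup A_1)$. Since $|E_1|,|E_2|\le k$ and $n>2k-t$, there is room to complete this construction whenever $|E_1\cap E_2|<t$, producing two $k$-sets in $\mathcal{F}$ with intersection below $t$, a contradiction. For (ii), left-compressedness of $\mathcal{F}$ transfers to $g(\mathcal{F})$ as follows: if $j\in E$ and $i\notin E$, then for any $k$-set $A\supseteq E$ in $\mathcal{F}$ with $i\notin A$, the shifted set $(A\setminus\{j\})\cup\{i\}=(E\setminus\{j\})\cup\{i\}\cup(A\setminus E)$ also lies in $\mathcal{F}$; letting $A$ range over such completions shows that the $k$-sets above $\delta_{ij}(E)$ are covered by the up-closure of $g(\mathcal{F})$, and minimality forces either $\delta_{ij}(E)\in g(\mathcal{F})$ or a proper $F\in g(\mathcal{F})$ below it. For (iii), given $B\in \mathcal{F}$, there is some $E\in g(\mathcal{F})$ with $E\subseteq B$; choosing $E$ to minimise $s^+(E)$ forces $B\cap[s^+(E)]=E$, since any extra element in $[s^+(E)]\setminus E$ together with the shift operation and (ii) would produce a strictly smaller generator inside $B$, contradicting the choice.

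For (iv) and (v), I would restrict attention to $g^*(\mathcal{F})$, the generators containing the top coordinate $\ell$. Part (v) is the main engine: a case analysis on pairs of sets in $\mathcal{F}_1$, combined with (i) and the explicit form of $\mathcal{D}(\cdot)$, shows $\mathcal{F}_1$ is $t$-intersecting; the cardinality identity is a direct count using the disjoint decomposition from (iii). Part (iv) then follows by contradiction with maximality: if some $E_1\in g^*_i(\mathcal{F})$ failed to pair with an $E_2\in g^*_{\ell+t-i}(\mathcal{F})$ satisfying $|E_1\cap E_2|=t$ and $E_1\cup E_2=[\ell]$, then the construction from (v) (or a symmetric variant acting on the generators containing $\ell$) would strictly enlarge $\mathcal{F}$ while preserving the $t$-intersecting property.

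The main obstacle is (iv): extracting the rigid pair structure $|E_1|+|E_2|=\ell+t$ together with $E_1\cup E_2=[\ell]$ requires pinpointing exactly when the push-up operation in (v) strictly increases $|\mathcal{F}|$, which in turn relies on (i) and a careful analysis of the up-sets of the generators involved. The remaining parts are comparatively routine once the shift-operation and minimal-generating-set framework is in place.
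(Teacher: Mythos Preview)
The paper does not give its own proof of this lemma: it is stated with the sentence ``From \cite{AK}, we know that the generating sets have the following properties'' and attributed to Ahlswede--Khachatrian. So there is no proof in the paper to compare against; what I can do is assess your sketch against the way the paper \emph{uses} the lemma and against the closely parallel argument the paper does write out for Lemma~\ref{lemmag1}.

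Your outlines for (i) and (iii) are correct and standard. For (ii), your argument establishes that every $k$-superset of $\delta_{ij}(E)$ lies in $\mathcal{F}$, but the concluding clause ``minimality forces either $\delta_{ij}(E)\in g(\mathcal{F})$ or a proper $F\in g(\mathcal{F})$ below it'' is not automatic for an arbitrary antichain $g(\mathcal{F})\in G_*(\mathcal{F})$: being an antichain generating set does not, by itself, guarantee that every set whose $k$-upset lies in $\mathcal{F}$ dominates some generator. The step is immediate, however, if one takes $g(\mathcal{F})$ to be the canonical generating set consisting of all inclusion-minimal $D$ with $\mathcal{U}(D)\cap\binom{[n]}{k}\subseteq\mathcal{F}$; this is the choice implicit in \cite{AK} and in the paper's ``(i) is obvious'' for Lemma~\ref{lemmag1}.

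The more substantive issue is the order you propose for (iv) and (v). You plan to prove (v) first and then extract (iv) from it, but in fact the case analysis for (v) already \emph{requires} the first assertion of (iv). Concretely, to see that $\mathcal{F}_1$ is $t$-intersecting you must show that for $E\in g^*_i(\mathcal{F})$ and $F\in g(\mathcal{F})\setminus g^*_{\ell+t-i}(\mathcal{F})$ with $\ell\in F$ one has $|(E\setminus\{\ell\})\cap F|\ge t$; this forces $|E\cap F|\ge t+1$, which you only know once you have shown that $|E\cap F|=t$ with $\ell\in E\cap F$ implies $|F|=\ell+t-i$. That implication (the first half of (iv)) is proved directly from (i) and (ii) by shifting $\ell$ in $F$ to an element of $[\ell]\setminus(E\cup F)$ and contradicting (i); it does not use maximality, and it does not come from (v). Only the existence clause in (iv) uses maximality (if $E\setminus\{\ell\}$ were $t$-intersecting with all of $g(\mathcal{F})$, one could enlarge $\mathcal{F}$). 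This is exactly how the paper argues the analogous parts (ii) and (iii) of Lemma~\ref{lemmag1}. So your overall plan is sound, but the dependency you describe between (iv) and (v) should be reversed.
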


Let $\mathcal A$ and $\mathcal B$ be two cross-$t$-intersecting subfamilies of  $\binom{[n]}{k}$.
 For $n>2k-t$, it is easy to find that $\mathcal A$  and $\mathcal B$ are cross-$t$-intersecting if and only if $g(\mathcal A)$ and $g(\mathcal B)$ are so for all $g(\mathcal A)\in G(\mathcal A)$ and $g(\mathcal B)\in G(\mathcal B)$.
By applying the shift operations if necessary, we may assume without loss of generality that both $\mathcal{A}$ and $\mathcal{B}$ are left-compressed. Similarly, on the generating sets of $\mathcal A$ and $\mathcal B$, we have the following result.

\begin{lemma}\label{lemmag1}
Let $\mathcal A$ and $\mathcal B$ be two maximal left-compressed cross-$t$-intersecting subfamilies of $\binom{[n]}{k}$, $g(\mathcal A)\in G(\mathcal A)$, $g(\mathcal B)\in G(\mathcal B)$ and $s=\max\{s^+(g(\mathcal A)), s^+(g(\mathcal B))\}$. Then the following statements hold.
\begin{itemize}
\item[\rm(i)]For $1\leq i<j\leq s$, $\cal F \in \{\cal A, \cal B\}$  and $E\in g(\mathcal F)$, we have either $\delta_{ij}(E)\in g(\mathcal F)$  or $F\subset \delta_{ij}(E)$ for some $F\in g(\mathcal F)$.
    \item [\rm(ii)] For $t\leq i\leq k$,  $g_i^*(\mathcal A)\neq\emptyset$ if and only if $g_{s+t-i}^*(\mathcal B)\neq\emptyset$. Furthermore, for each $E\in g_i^*(\mathcal A)$, there exists $F\in g_{s+t-i}^*(\mathcal B)$ such that $|E\cap F|=t$ and $E\cup F=[s]$.
        \item[\rm(iii)]If $g_i^*(\mathcal A)\neq\emptyset$, then $\mathcal A_1=\mathcal A\cup \mathcal{D}(g_i^*(\mathcal A)')$ and $\mathcal B_1=\mathcal B\setminus \mathcal D(g_{s+t-i}^*(\mathcal B))$ are also cross-$t$-intersecting families of $\binom{[n]}{k}$ with
            \[|\mathcal A_1|=|\mathcal A|+|g_i^*(\mathcal A)|\binom{n-s}{k-i+1} \ and \ |\mathcal B_1|=|\mathcal B|-|g_{s+t-i}^*(\mathcal B)|\binom{n-s}{k+i-s-t}. \]

 \end{itemize}
\end{lemma}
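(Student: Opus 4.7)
The plan is to prove (i), (ii), (iii) in order, adapting the proofs of Lemma~\ref{gtf}(ii), (iv), (v) from the $t$-intersecting setting to the cross-$t$-intersecting one by using the maximality of $(\mathcal{A},\mathcal{B})$ in place of the $t$-intersecting hypothesis on a single family.

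Part (i) follows by the proof of Lemma~\ref{gtf}(ii), which uses only that $\mathcal{F}\in\{\mathcal{A},\mathcal{B}\}$ is left-compressed. When $j>s^+(g(\mathcal{F}))$ the element $j$ lies in no generator of $\mathcal{F}$, so $\delta_{ij}(E)=E\in g(\mathcal{F})$; otherwise the standard shift argument shows that every $k$-superset of $\delta_{ij}(E)$ belongs to $\mathcal{F}$, so $\delta_{ij}(E)$ is itself a generator, and minimality of $g(\mathcal{F})$ finishes the job.

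For part (ii), fix $E\in g_i^*(\mathcal{A})$. Since $E$ is minimal in $g(\mathcal{A})$, the set $E\setminus\{s\}$ is not a generator of $\mathcal{A}$, so we may choose a $k$-set $A_0$ with $A_0\cap[s]=E\setminus\{s\}$ (for instance $A_0=(E\setminus\{s\})\cup\{s+1,\dots,s+k-i+1\}$) and verify that $A_0\notin\mathcal{A}$: any generator contained in $A_0$ would sit inside $E\setminus\{s\}\subsetneq E$, contradicting minimality. Maximality of $(\mathcal{A},\mathcal{B})$ then produces $B_0\in\mathcal{B}$ with $|A_0\cap B_0|\le t-1$. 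Every $k$-superset of $E$ lies in $\mathcal{A}$ and therefore meets $B_0$ in at least $t$ elements; comparing $|A\cap B_0|$ with $|A_0\cap B_0|$ for carefully chosen supersets $A$ of $E$ (and using $n\ge(t+1)(k-t+1)$ to arrange $A\setminus[s]$ disjoint from $B_0$) forces $s\in B_0$ and $|E\cap B_0|=t$. Let $F:=B_0\cap[s]$ and let $F_*\in g(\mathcal{B})$ be a generator with $B_0\in\mathcal{D}(F_*)$. Since every generator of $\mathcal{B}$ must intersect $E$ in at least $t$ elements (a direct consequence of cross-$t$-intersection, obtained by picking a $k$-superset of $F_*$ whose part outside $[s]$ avoids $E$), we have $s\in F_*$: otherwise $F_*\cap E\subseteq(B_0\cap E)\setminus\{s\}$ would have size $\le t-1$. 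Hence $F_*=F$. Finally, if $E\cup F\ne[s]$, pick $y\in[s]\setminus(E\cup F)$ and apply part (i) to $\delta_{y,s}(F)$: the resulting generator $F'\in g(\mathcal{B})$ satisfies $F'\subseteq(F\setminus\{s\})\cup\{y\}$, whence $|E\cap F'|\le t-1$, a contradiction. Therefore $E\cup F=[s]$, $|F|=s+t-i$, and $F\in g_{s+t-i}^*(\mathcal{B})$. The converse direction is symmetric.

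For part (iii), the disjoint-union decomposition $\mathcal{F}=\bigcup_{E'\in g(\mathcal{F})}\mathcal{D}(E')$ from Lemma~\ref{gtf}(iii), which requires only minimality of generators and left-compression, yields the cardinality formulas immediately. To check cross-$t$-intersection of $(\mathcal{A}_1,\mathcal{B}_1)$, consider $A\in\mathcal{A}_1\setminus\mathcal{A}$ with $A\cap[s]=E\setminus\{s\}$ for some $E\in g_i^*(\mathcal{A})$, and $B\in\mathcal{B}_1$ with $B\cap[s]=F'$ for some $F'\in g(\mathcal{B})\setminus g_{s+t-i}^*(\mathcal{B})$. Since $A\setminus[s]$ can be arbitrary, the bound $|A\cap B|\ge t$ reduces to $|(E\setminus\{s\})\cap F'|\ge t$: this is immediate when $s\notin F'$, and when $s\in F'$ the only obstruction $|E\cap F'|=t$ is ruled out by the same shift argument used in part (ii), which forces $E\cup F'=[s]$ and $|F'|=s+t-i$, contradicting $F'\notin g_{s+t-i}^*(\mathcal{B})$. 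The main obstacle will be part (ii): extracting the precise structural conclusion about $F$ from the single witness $B_0$ requires carefully combining maximality, left-compression, and part (i); once this is in place, (iii) is essentially a book-keeping exercise built on the disjoint decomposition of Lemma~\ref{gtf}(iii).
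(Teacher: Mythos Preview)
Your strategy is the same as the paper's—use maximality to produce a witness whose intersection with $E$ has size exactly $t$, then apply (i) with a shift $\delta_{r,s}$ to force $E\cup F=[s]$—but you execute part~(ii) at the level of $k$-sets, whereas the paper works entirely at the level of generators. Just before the lemma the paper records that for $n>2k-t$ the generating sets $g(\mathcal A)$ and $g(\mathcal B)$ are themselves cross-$t$-intersecting; this lets the paper argue directly: if $E':=E\setminus\{s\}$ satisfied $|E'\cap F|\ge t$ for every $F\in g(\mathcal B)$, then $\mathcal A\cup\mathcal D(E')$ and $\mathcal B$ would still be cross-$t$-intersecting, contradicting maximality, so some $F_0\in g(\mathcal B)$ has $|E'\cap F_0|\le t-1$ and hence $|E\cap F_0|=t$, $s\in F_0$. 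No descent to $k$-sets is needed, and the detour through $B_0$, $F$, $F_*$ disappears.

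One specific point in your write-up deserves correction. In your step~4 you say you use $n\ge(t+1)(k-t+1)$ to \emph{arrange} $A\setminus[s]$ disjoint from $B_0$. That bound is neither a hypothesis of the lemma nor sufficient for what you claim (you would need roughly $n\ge 2k+s-i$). The correct argument is a swap: for any $y\in A_0\setminus[s]$ set $A:=(A_0\setminus\{y\})\cup\{s\}$, so $A\cap[s]=E$ and $A\in\mathcal A$; comparing $|A\cap B_0|\ge t$ with $|A_0\cap B_0|\le t-1$ forces first $s\in B_0$, then $y\notin B_0$ for every such $y$, hence $(A_0\setminus[s])\cap B_0=\emptyset$ and $|E\cap B_0|=t$. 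So the disjointness is a \emph{consequence}, not something you arrange in advance. With this fix your route is correct; the paper's generator-level argument simply reaches $F_0$ in one line instead of three. Part~(iii) is handled identically in both.
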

\begin{proof} (i) is obvious. Firstly,  we prove (ii). Suppose $g_i^*(\mathcal A)\neq\emptyset$ for some $t\leq i\leq k$. For any $E\in g_i^*(\mathcal A)$, set $E'=E\setminus\{s\}$.
If $|E'\cap F|\geq t$ for all $F\in g(\mathcal B)$, then $\mathcal A_1=\mathcal A\cup \mathcal D(E')$ and $\mathcal B$ are also cross-$t$-intersecting with $\mathcal A\subsetneq\mathcal A_1$, contradicting the maximality of $\mathcal A$ and $\mathcal B$. Therefore $|E'\cap F_0|\le t-1$ for some $F_0\in g(\mathcal B)$. However, $|E\cap F_0|\geq t$, so we have $|E\cap F_0|=t$ and $s\in E\cap F_0$.
Suppose $r\not\in E\cup F_0$ for some $r\in [s]$.
Then by (i), there exists $F_1\in g(\mathcal B)$ such that $F_1\subseteq \delta_{rs}(F_0)$.
However $|E\cap F_1|\le |E\cap \delta_{rs}(F_0)|\le t-1$, contradicting that $g(\mathcal A)$ and $g(\mathcal B)$ are cross-$t$-intersecting. Hence, $E\cup F_0=[s]$ and $|E\cap F_0|=t$, i.e, $|F_0|=s+t-i$ and $g_{s+t-i}^*(\mathcal B)\neq\emptyset$.

Secondly, we only prove the front part of (iii), since the latter part of (iii) is obvious. It suffices to prove that $g_i^*(\mathcal A)'$ and $g(\mathcal B)\setminus g_{s+t-i}^*(\mathcal B)$ are cross-$t$-intersecting.
Let $A\in g_i^*(\mathcal A)'$ and $B\in g(\mathcal B)\setminus g_{s+t-i}^*(\mathcal B)$.
If $s\notin B$, then $|B\cap A|=|B\cap (A\cup \{s\})|\ge t$ since $g(\mathcal A)$ and $g(\mathcal B)$ are cross-$t$-intersecting, and $A\cup \{s\}\in g(\mathcal A)$.
Otherwise $s\in B$. For the contrary suppose that $|A\cap B|\le t-1$. Then it must be $|(A\cup \{s\}) \cap B|= t$. By (i) and (ii), it is not difficult to see that $(A\cup \{s\}) \cup B=[s]$.
It follows that $B\in g_{s+t-i}^*(\mathcal B)$, contradicting to $B\in g(\mathcal B)\setminus g_{s+t-i}^*(\mathcal B)$.
Thus $|A\cap B|\ge t$. This completes the proof.
\qed
\end{proof}

In the following, we will introduce some results needed in Section \ref{section4}.

\begin{theo}[Wang and Zhang\cite{wz2}]\label{wzn}
Let $n$, $k$ and $t$ be positive integers with $n>2k-t$. If $\mathcal A$ and $\mathcal B$ are nonempty cross-$t$-intersecting subfamilies of $\binom{[n]}{k}
$, then \[|\mathcal A|+|\mathcal B|\leq \binom{n}{k}- \sum_{0\leq i\leq t-1}\binom k i \binom {n-k}{k-i}+1,\]
and equality holds if and only if one of the following holds:
\begin{itemize}
  \item [\rm(i)] up to isomorphism, either $\mathcal A=[k]$ and $\mathcal B=\{B\in \binom{[n]}{k}: |B\cap [k]|\geq t\}$ or $\mathcal B=[k]$ and $\mathcal A=\{A\in \binom{[n]}{k}: |A\cap [k]|\geq t\}$;
  \item [\rm(ii)]$(n,k,t)=(k+2,k,k-1)$, and up to isomorphism, $\mathcal A=\mathcal B=\binom{[k+1]}{k}$.
\end{itemize}
 \end{theo}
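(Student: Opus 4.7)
The plan is to first reduce to the left-compressed case by iterating the shift operators $\Delta_{ij}$, which preserve both sizes and the cross-$t$-intersecting property, and then to extend $(\mathcal A,\mathcal B)$ to a maximal cross-$t$-intersecting pair; both steps keep both families nonempty and cannot decrease $|\mathcal A|+|\mathcal B|$. Since any nonempty left-compressed $k$-uniform family automatically contains $[k]$, after these reductions $[k]\in \mathcal A\cap \mathcal B$.

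The argument then splits according to whether either family is a singleton. If $\mathcal A=\{[k]\}$, the cross-$t$-intersecting property forces every $B\in\mathcal B$ to satisfy $|B\cap[k]|\ge t$, and maximality gives $\mathcal B=\{B\in\binom{[n]}{k}:|B\cap[k]|\ge t\}$; summing sizes yields exactly $1+\bigl(\binom{n}{k}-\sum_{0\le i<t}\binom{k}{i}\binom{n-k}{k-i}\bigr)$, which is the equality case (i). The situation $\mathcal B=\{[k]\}$ is symmetric.

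It remains to treat the case $|\mathcal A|,|\mathcal B|\ge 2$. Pick minimal generating sets $g_*(\mathcal A)\in G_*(\mathcal A)$ and $g_*(\mathcal B)\in G_*(\mathcal B)$, and let $s=\max\{s^+(g_*(\mathcal A)),s^+(g_*(\mathcal B))\}$; the absence of a singleton forces $s\ge k+1$. For each index $i$ with $g_i^*(\mathcal A)\ne\emptyset$, Lemma~\ref{lemmag1}(iii) produces the cross-$t$-intersecting pair $(\mathcal A\cup \mathcal D(g_i^*(\mathcal A)'),\mathcal B\setminus \mathcal D(g_{s+t-i}^*(\mathcal B)))$, which differs from $(\mathcal A,\mathcal B)$ in the total size by
\[
|g_i^*(\mathcal A)|\binom{n-s}{k-i+1}-|g_{s+t-i}^*(\mathcal B)|\binom{n-s}{k+i-s-t};
\]
swapping the roles of $\mathcal A$ and $\mathcal B$ yields the analogous expression with the two terms interchanged. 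Maximality of $(\mathcal A,\mathcal B)$ forces both differences to be nonpositive, and hence both equal zero for every admissible $i$.

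The main obstacle is to convert these equalities into rigid structural information. Using Lemma~\ref{lemmag1}(ii) to pair each $E\in g_i^*(\mathcal A)$ with some $F\in g_{s+t-i}^*(\mathcal B)$ satisfying $E\cup F=[s]$ and $|E\cap F|=t$, together with the strict monotonicity of the binomial ratio $\binom{n-s}{k-i+1}/\binom{n-s}{k+i-s-t}$ in $i$ granted by $n>2k-t$, a case analysis on $s-k$ and on the admissible indices $i\in[t,k]$ should show that the only surviving configurations are $s=k$ (already handled by the singleton case) and the degenerate parameter range $(n,k,t)=(k+2,k,k-1)$ with $s=k+1$ and $\mathcal A=\mathcal B=\binom{[k+1]}{k}$, whose sizes meet the bound exactly and constitute case (ii). In every remaining scenario one of the two perturbations strictly increases $|\mathcal A|+|\mathcal B|$, contradicting maximality and yielding the strict inequality $|\mathcal A|+|\mathcal B|<\binom{n}{k}-\sum_{0\le i<t}\binom{k}{i}\binom{n-k}{k-i}+1$.
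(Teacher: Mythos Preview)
This theorem is not proved in the present paper: it is quoted from Wang and Zhang~\cite{wz2} and used only as a black box in Subcase~3.3 of Section~\ref{section4}. So there is no argument here against which to compare your proposal.

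Independently, your sketch has real gaps. First, the claim ``the absence of a singleton forces $s\ge k+1$'' is sensitive to which generating sets you choose and is false for the natural minimal choice: with $\mathcal A=\mathcal B=\{F\in\binom{[n]}{k}:[t]\subset F\}$ (left-compressed, a maximal cross-$t$-intersecting pair when $n>2k-t$, and of size $\binom{n-t}{k-t}\ge 2$) the minimal generating set is $\{[t]\}$, so $s=t$. If instead you intend $g(\mathcal A)=\mathcal A$ and $g(\mathcal B)=\mathcal B$, then every generator has size exactly $k$, whence $g_i^*=\emptyset$ for all $i\ne k$ and Lemma~\ref{lemmag1}(ii) forces $s=2k-t$; but then the ``admissible indices $i$'' in your perturbation argument collapse to the single value $i=k$, and the monotonicity of the binomial ratio in $i$ that you invoke has nothing to act on.

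Second, and more seriously, the decisive step is deferred to ``a case analysis \dots\ should show''. This is exactly where the substance of the Wang--Zhang theorem lies. You have not explained why the simultaneous vanishing of both perturbation differences forces the families down to the extremal shapes, nor how the exceptional triple $(n,k,t)=(k+2,k,k-1)$ with $\mathcal A=\mathcal B=\binom{[k+1]}{k}$ is isolated by the analysis. As written, the proposal is an outline of a strategy rather than a proof.
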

 \begin{theo}[Normalized matching property, cf. \cite{Sperner}]\label{nmm} Let $n$ and $j$ be two positive integers with $1\leq j\leq n-1$. For all non-empty $\mathcal F\subseteq\binom{[n]}{j}$,
 \[\frac{|\bigtriangledown(\mathcal F)|}{|\mathcal F|}\geq \frac{\binom{n}{j+1}}{\binom{n}{j}},\] where $\bigtriangledown(\mathcal F)=\{B\in \binom{[n]}{j+1}: \mbox{$A\subset B$ for some $A\in\mathcal F$}\}$. \end{theo}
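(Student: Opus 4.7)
The plan is to prove the normalized matching property by a simple double-counting argument on the bipartite containment graph between $\mathcal F$ and its up-shadow $\bigtriangledown(\mathcal F)$. The observation driving the argument is that every $(j+1)$-set containing some $A \in \mathcal F$ already lies in $\bigtriangledown(\mathcal F)$ by definition, so the incidence relation $\{(A,B) : A \in \mathcal F,\ B \in \bigtriangledown(\mathcal F),\ A \subset B\}$ coincides with $\{(A,B) : A \in \mathcal F,\ B \in \binom{[n]}{j+1},\ A \subset B\}$. I will then estimate this common cardinality two ways.

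First I would count from the $\mathcal F$-side. Each $A \in \mathcal F$ is contained in exactly $n - j$ sets of size $j+1$, namely the sets $A \cup \{x\}$ for $x \in [n] \setminus A$, and each of these supersets belongs to $\bigtriangledown(\mathcal F)$. Hence the total number of incident pairs equals exactly $|\mathcal F|\,(n - j)$.

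Next I would count from the $\bigtriangledown(\mathcal F)$-side. Each $B \in \bigtriangledown(\mathcal F)$ has exactly $j+1$ subsets of size $j$, at most all of which may belong to $\mathcal F$. Hence the number of incident pairs is at most $|\bigtriangledown(\mathcal F)|\,(j+1)$. Combining the two estimates yields
\[
  |\mathcal F|\,(n-j) \;\le\; |\bigtriangledown(\mathcal F)|\,(j+1),
\]
and since $\binom{n}{j+1}/\binom{n}{j} = (n-j)/(j+1)$, dividing gives the claimed bound.

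There is no serious obstacle here: this is the textbook LYM-style double count, and the only point requiring care is the first step, where one must verify that the $(j+1)$-supersets being counted all lie in $\bigtriangledown(\mathcal F)$, so that the upper bound from the shadow side is legitimate. Equality can be analyzed by observing that it forces every $B \in \bigtriangledown(\mathcal F)$ to have all of its $j+1$ subsets of size $j$ inside $\mathcal F$, but this sharpness is not needed for the inequality as stated.
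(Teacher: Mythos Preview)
Your proof is correct; it is the standard double-counting argument for the normalized matching property. The paper itself does not supply a proof of this statement---it is recorded there as a known result with a citation to Sperner---so there is no in-paper argument to compare against.
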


\section{Some basic inequalities concerning the proofs}

In this section, we will prove a technical result, Theorem \ref{lemmakeyineq}, which is crucial to our proof of Theorem \ref{maintheo}.
The following Lemma is a very simple inequality, but the central idea of our proof.
\begin{lemma} \label{basefact}
Let $A,B,a,b$ be  positive real numbers. Then $(A+a)(B-b)<AB$ if and only if $\frac B {A+a} <\frac b a$.
\end{lemma}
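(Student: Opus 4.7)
The plan is a direct algebraic manipulation, exploiting only that $a$ and $A+a$ are strictly positive so that dividing by $a(A+a)$ preserves the inequality direction. First I would expand the left-hand side of $(A+a)(B-b)<AB$ to get $AB + aB - Ab - ab < AB$, then cancel $AB$ from both sides to obtain the equivalent inequality $aB - Ab - ab < 0$, i.e., $aB < b(A+a)$.

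From here, dividing by the positive quantity $a(A+a)$ yields $\frac{B}{A+a} < \frac{b}{a}$, which is the desired form. Every step is reversible, so the chain of equivalences establishes the "if and only if" statement simultaneously in both directions; no separate argument for the converse is needed. The only thing to remain watchful about is the sign of $B-b$, but this is irrelevant to the equivalence itself since we never divide by $B-b$; positivity of $a$ and $A+a$ is what matters, and both are given. There is no real obstacle here — the lemma is essentially a bookkeeping identity packaged in a form convenient for the later inductive/exchange arguments that drive the proof of Theorem \ref{maintheo}.
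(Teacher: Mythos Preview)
Your argument is correct and is exactly the elementary algebraic rearrangement one would expect; the paper in fact omits the proof entirely, treating the lemma as evident, so there is nothing further to compare.
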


Throughout this section we suppose that $n$, $k$, $s$, $t$ and $i$ are positive integers such that $(t+1)(k-t+1)\leq n$, $t+3\leq s\leq 2k-t$ and $\max\{t+1,s+t-k\}\leq i\leq \min\{k,\frac{s+t}{2}\}$, and define
\begin{equation}\label{H1234}
\left\{
\begin{aligned}
&S_1=i(n-k-s+i+1)+(s-i)(n-s+1)=s(n-s+1)-i(k-i), \\
&S_2=(s+t-i)(n-k-i+t+1)+(i-t)(n-s+1)=s(n-s+1)-(s+t-i)(k+i-s-t), \\
&T_1=i(n-k-s+i+1)+(s-i)(k-i+1),  \\
&T_2=(s+t-i)(n-k-i+t+1)+(i-t)(k-s-t+i+1).
\end{aligned}
\right.
\end{equation}
Since $t+3\leq s\leq 2k-t$, we have $t+2\le k$. Since $s+t-k\le i\le \frac{s+t}{2}$, we have $2i-t\le s\le k+i-t$. The remaining part of this section is devoted to the following key theorem.
\begin{theo}\label{lemmakeyineq}
Take notations as above and suppose $(s,i,t)\ne (6,4,3)$. Then we have
\begin{eqnarray}\label{ff1}
\frac{(k-i+1)(k+i+1-s-t)S_1 S_2}
{(n-s+1)^2 T_1 T_2}>\frac{\binom{n-s}{k-i}\binom{n-s}{k+i-s-t}}{\binom{n-s}{k-i+1}\binom{n-s}{k+i+1-s-t}}.
\end{eqnarray}
\end{theo}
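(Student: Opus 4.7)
The plan is to reduce (\ref{ff1}) to a polynomial inequality in $n$ via elementary simplifications, and then verify positivity on the whole admissible range by boundary analysis. I break the argument into four clear steps.

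First, I simplify the right-hand side of (\ref{ff1}). The identity $\binom{N}{r}/\binom{N}{r+1} = (r+1)/(N-r)$ gives
\[
\frac{\binom{n-s}{k-i}\binom{n-s}{k+i-s-t}}{\binom{n-s}{k-i+1}\binom{n-s}{k+i+1-s-t}} = \frac{(k-i+1)(k+i+1-s-t)}{(n-s-k+i)(n-k-i+t)}.
\]
Both factors $(k-i+1)$ and $(k+i+1-s-t)$ are positive by the bounds on $i$, so they cancel with the matching factors on the left. Writing $a = n-s-k+i$, $b = n-k-i+t$, and $m = n-s+1$, inequality (\ref{ff1}) is therefore equivalent to
\[
S_1 S_2 \cdot ab > m^2 \cdot T_1 T_2.
\]

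Second, a direct expansion of (\ref{H1234}) yields the key algebraic identities
\[
S_1 = T_1 + (s-i)\, a, \qquad S_2 = T_2 + (i-t)\, b.
\]
Substituting these, and using $m^2 - ab = (p+q+2)m - (p+1)(q+1)$ with $p := k-i \ge 0$ and $q := k+i-s-t \ge 0$, the inequality becomes
\[
(s-i) a^2 b T_2 + (i-t) a b^2 T_1 + (s-i)(i-t) a^2 b^2 > T_1 T_2 \bigl[(p+q+2)\, m - (p+1)(q+1)\bigr].
\]
Because $s-i \ge 1$ and $i-t \ge 1$ under the standing hypotheses, the left side is a polynomial in $n$ of degree $4$ with strictly positive leading coefficient $s(s-i) + i(i-t)$, while the right side is of degree only $3$. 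Hence the inequality holds for all sufficiently large $n$.

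The final and hardest step is to push positivity down to the boundary $n = (t+1)(k-t+1)$. My plan is to substitute $n = (t+1)(k-t+1) + r$ with $r \ge 0$ and express the difference of the two sides as a polynomial $P(r)$ whose coefficients depend on $(s,i,t,k)$; the target is then to show that every coefficient of $P$ is nonnegative and that $P(0) > 0$, via a case analysis driven by the parameter ranges $t+3 \le s \le 2k-t$ and $\max\{t+1,\, s+t-k\} \le i \le \min\{k,\, (s+t)/2\}$. Special care will be needed at the endpoint values of $s$ and $i$, where several of the quantities $s-i$, $i-t$, $p$, $q$ collapse towards zero. Small numerical experiments strongly suggest that the triple $(s,i,t)=(6,4,3)$ is the unique parameter choice for which $P(0)$ becomes negative for every admissible $k$, which is precisely why the theorem excludes it; isolating this single exception within the case analysis will be the main technical obstacle.
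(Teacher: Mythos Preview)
Your first two steps are correct and match the paper's own reduction: the right-hand side simplifies exactly as you write, and the identities $S_1 = T_1 + (s-i)a$, $S_2 = T_2 + (i-t)b$ are valid. The degree-$4$ versus degree-$3$ observation does give the result for $n$ large.

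However, the proposal is not a proof: the entire difficulty lies in your ``final and hardest step,'' which you have only sketched. Your intended strategy---expanding the difference as a polynomial $P(r)$ in $r = n - (t+1)(k-t+1)$ and showing all coefficients are nonnegative with $P(0)>0$---is unlikely to go through cleanly. The paper's argument already signals why: its uniform method (a chain of inequalities built on the elementary fact that $(A+a)(B-b)<AB$ iff $B/(A+a)<b/a$, together with four preparatory lemmas comparing $S_1,S_2,T_1,T_2$, e.g.\ $S_1>T_1+s(k-i+1)$ and $T_2+s(k+i-s-t+1)\le s(n-s+1)$) breaks down not only at $(6,4,3)$ but at \emph{six further} triples $(8,6,5)$, $(7,5,4)$, $(8,5,3)$, $(8,4,3)$, $(7,5,3)$, $(7,4,3)$, each of which requires a separate direct verification. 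This indicates that the region of parameter space near the smallest admissible $(s,i,t)$ is genuinely delicate; even if $P(0)>0$ away from $(6,4,3)$ as your numerics suggest, there is no reason to expect the higher coefficients of $P$ to be nonnegative uniformly, so your positivity criterion would fail and you would be thrown back onto an ad hoc case analysis at least as long as the paper's.

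In short, you have carried out the easy reductions but deferred the actual proof. To complete it you would need either an explicit, exhaustive analysis of $P(r)$ over the full parameter range---with no guarantee that ``nonnegative coefficients'' is the right invariant---or an argument closer to the paper's, which avoids full expansion by peeling off factors via the $(A+a)(B-b)<AB$ trick and the auxiliary comparison lemmas.
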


We start with some technical lemmas.
\begin{lemma}\label{lem_prep1}
$S_1+S_2-T_1-T_2\geq s(k-i+1+k+i-s-t+1)$ if $(s,i,t)$ is not one of $(8,6,5),$ $(7,5,4)$, $(8,5,3)$, $(8,4,3)$, $(7,5,3)$, $(7,4,3)$ and $(6,4,3)$.
\end{lemma}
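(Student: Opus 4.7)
The plan is to simplify $S_1+S_2-T_1-T_2$ algebraically and reduce the desired inequality to an assertion that is linear in $k$, with the leading coefficient depending only on $s$ and $t$.

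Direct subtraction yields
\[
S_1-T_1=(s-i)\bigl[(n-s+1)-(k-i+1)\bigr]=(s-i)(n-s-k+i),
\]
\[
S_2-T_2=(i-t)\bigl[(n-s+1)-(k-s-t+i+1)\bigr]=(i-t)(n-k+t-i),
\]
so $S_1+S_2-T_1-T_2$ is linear in $n$ with slope $(s-i)+(i-t)=s-t>0$. Hence it suffices to prove the inequality at $n_0:=(t+1)(k-t+1)$. Writing $M:=t(k-t)+1$, $p:=s-i$, $q:=i-t$, one checks $n_0-s-k+i=M-p$ and $n_0-k+t-i=M-q$, giving
\[
(S_1+S_2-T_1-T_2)\big|_{n=n_0}=p(M-p)+q(M-q)=(s-t)\bigl[t(k-t)+1\bigr]-(s-i)^2-(i-t)^2.
\]
Since $k-i+1+k+i-s-t+1=2k-s-t+2$, the lemma reduces to proving
\[
(s-t)\bigl[t(k-t)+1\bigr]-(s-i)^2-(i-t)^2\;\ge\; s(2k-s-t+2).
\]

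Rearranging, this inequality takes the form $\alpha(s,t)\cdot k+\beta(s,i,t)\ge 0$, where a short calculation gives $\alpha(s,t)=s(t-2)-t^2$. The crucial feature is that $\alpha$ depends only on $(s,t)$, not on $i$. Hence if $\alpha(s,t)<0$, the inequality fails for all sufficiently large $k$, so every tuple $(s,i,t)$ with $\alpha(s,t)<0$ must be excluded; conversely, if $\alpha(s,t)\ge 0$, the inequality is non-decreasing in $k$ and can be verified at the smallest admissible value $k_{\min}=\max(\lceil(s+t)/2\rceil,\,s+t-i)$.

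Combined with $s\ge t+3$, the condition $\alpha(s,t)<0$ is equivalent to $(t,s)\in\{(3,6),(3,7),(3,8),(4,7),(5,8)\}$: for $t\ge 6$ one has $\alpha(s,t)\ge(t+3)(t-2)-t^2=t-6\ge 0$, while for $t\in\{3,4,5\}$ a direct check leaves exactly these five pairs. Enumerating the admissible integers $i$ (satisfying $\max(t+1,s+t-k)\le i\le\lfloor(s+t)/2\rfloor$ for some admissible $k$) for each of these five pairs produces precisely the seven exceptional tuples $(6,4,3)$, $(7,4,3)$, $(7,5,3)$, $(8,4,3)$, $(8,5,3)$, $(7,5,4)$, $(8,6,5)$ listed in the statement.

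The main obstacle is the finite case verification in this last step: although $\alpha(s,t)\ge 0$ gives monotonicity in $k$, the constant term $\beta(s,i,t)$ can be negative when $(s-i)^2+(i-t)^2$ is large, so one must compute $\alpha(s,t)\cdot k_{\min}(s,i,t)+\beta(s,i,t)$ explicitly for the remaining tuples. For $t\ge 6$ the computation yields a uniformly nonnegative expression in $s,t,i$, while for $t\in\{3,4,5\}$ each remaining $(s,i,t)$ with $\alpha(s,t)\ge 0$ must be checked individually. This bookkeeping, though routine, comprises the bulk of the effort in the proof.
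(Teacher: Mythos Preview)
Your approach is correct and takes a genuinely different route from the paper's. The paper keeps $n,k$ as free parameters, sets $f(s,i,t):=S_1+S_2-T_1-T_2-s(2k-s-t+2)$, shows $\partial f/\partial i=2(s+t-2i)\ge 0$ and $\partial f/\partial s>0$, and then evaluates $f$ at the corner $(s,i)=(s_{\min},t+1)$ in the regimes $t\ge 6$, $t\in\{4,5\}$, $t=3$; the seven exceptions emerge simply as the boundary cases where this corner evaluation fails. You instead first specialize $n$ to $(t+1)(k-t+1)$ by linearity in $n$, then observe the result is linear in $k$ with slope $\alpha(s,t)=s(t-2)-t^2$ independent of $i$. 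This yields the neat structural explanation that the seven excluded tuples are \emph{exactly} those with $\alpha(s,t)<0$, something the paper's proof does not make visible.

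One caveat on your last step: you say that for $t\in\{3,4,5\}$ the tuples with $\alpha(s,t)\ge 0$ are to be ``checked individually,'' but there are infinitely many such tuples (e.g.\ for $t=3$ all $s\ge 9$ and all admissible $i$). Substituting $k_{\min}=s+t-i$ into $\alpha k+\beta$ gives a polynomial in $p:=s-i$, $q:=i-t$, namely $p^2(t-2)+tp(q-1)-p+q(t-1)-2t$; one still needs a short uniform argument (monotonicity in $q$, then the case $q=1$ reduces to $p^2(t-2)-p-t-1\ge 0$ on the relevant range of $p$) rather than an enumeration. This is indeed routine, but it is not a finite case check as your wording suggests.
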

\begin{proof}
Set $f(s,i,t): = S_1+S_2-T_1-T_2-s(2k-s-t+2)$. After plugging in the expressions in \eqref{H1234}, we obtain
\[
f(s,i,t)=(s-i)(n+i-k-s)+(i-t)(n+t-k-i)-s(2k-s-t+2).
\]
We have $\frac{\partial f}{\partial i}(s,i,t)=2(s+t-2i)\geq 0$ (recall that $i\le \frac{s+t} 2$), and
\[
  \frac{\partial f}{\partial s}(s,i,t)=n+2i-3k+t-2\geq (t+1)(k-t+1)+3t-3k> (t-2)(k-t+1)>0
\]
by the assumptions that $n\ge (t+1)(k-t+1)$ and $i\ge t+1$. It follows that $f(s,i,t)$ is an increasing function in both $i$ and $s$. Recall that $s\ge t+3$.
If $t\ge 6$, then
\begin{eqnarray*}
f(s,i,t)\ge f(t+3,t+1,t)&=&2(n-k-2)+(n-k-1)-(t+3)(2k-2t-1)\\
&=& 3(n-k-1)-2(t+3)(k-t)+t+1
\geq(t-6)(k-t)+t+1>0.
\end{eqnarray*}
It remains to consider the case $t\le 5$. If $t\in\{4,5\}$ and $s=t+3$, then we deduce from $t+1\le i\le\frac{s+t}{2}$ that $(s,i,t)$ is one of $(8,6,5)$ and $(7,5,4)$, which have been excluded from consideration.
If $t\in \{4,5\}$ and $s\ge t+4$, then
\begin{eqnarray*}
f(s,i,t)\ge f(t+4,t+1,t)&=&3(n-k-3)+(n-k-1)-(t+4)(2k-2t-2)\\ &=&4(n-k-1)-2(t+4)(k-t)+2t+2
\geq2(t-4)(k-t)+2t+2>0.
\end{eqnarray*}
Finally, suppose that $t=3$. If $s\le 8$, then we deduce from $4=t+1\le i\le\frac{s+t}{2}$ that $(s,i,t)$ is one of $(8,5,3)$, $(8,4,3)$, $(7,5,3)$, $(7,4,3)$ and  $(6,4,3)$, which have been excluded from consideration. We thus have $s\ge 9$ and correspondingly
\begin{eqnarray*}
f(s,i,t)\ge f(9,4,3)&=&5(n-k-5)+(n-k-1)-9(2k-10)=6n-24k+64\geq 16.
\end{eqnarray*}
This completes the proof.\qed
\end{proof}

\begin{lemma}\label{lem_prep2}
$S_1>T_1+s(k-i+1)$ if $(s,i,t)$ is not one of $(7,5,3)$, $(7,4,3)$ or $(6,4,3)$.
\end{lemma}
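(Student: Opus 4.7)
The plan is to collapse $S_1-T_1$ to a clean product, use strict monotonicity in $n$ to reduce to the smallest admissible $n$, and then finish by a sign--of--slope trichotomy in the spirit of Lemma~\ref{lem_prep1}.

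First I would subtract directly from \eqref{H1234}: since both $S_1$ and $T_1$ share the summand $i(n-k-s+i+1)$, cancellation gives
\[
S_1 - T_1 \;=\; (s-i)\bigl[(n-s+1)-(k-i+1)\bigr] \;=\; (s-i)(n-s-k+i),
\]
so the desired inequality is equivalent to $(s-i)(n-s-k+i)>s(k-i+1)$. Because $i\le (s+t)/2<s$, the coefficient $s-i$ is positive, hence the left-hand side is strictly increasing in $n$. It therefore suffices to verify the inequality at the minimum admissible value $n_{0}=(t+1)(k-t+1)$, where a short computation yields $n_{0}-s-k+i=t(k-t)+1-(s-i)$. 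The problem reduces to proving
\begin{equation}\label{eqn_reduced_lem32}
(s-i)\bigl[t(k-t)+1-(s-i)\bigr] \;>\; s(k-i+1).
\end{equation}

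Next I would view \eqref{eqn_reduced_lem32} as a linear inequality in $k$,
\[
k\bigl[(s-i)t-s\bigr] \;>\; (s-i)\bigl[t^{2}+(s-i)-1\bigr]-s(i-1),
\]
and split on the sign of the slope $(s-i)t-s=s(t-1)-it$. In Case~(i), $(s-i)t-s\ge 1$: the inequality tightens with $k$, so it suffices to check it at $k_{\min}=\max\{i,s+t-i\}$; the resulting closed polynomial condition in $s,i,t$ I would handle by partial-derivative monotonicities in $s$ and then in $i$ (exactly mirroring Lemma~\ref{lem_prep1}), reducing to finitely many minimal triples and splitting into $t\ge 6$, $t\in\{4,5\}$ and $t=3$. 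In Case~(ii), $(s-i)t-s=0$: the $k$-dependence disappears and \eqref{eqn_reduced_lem32} becomes a condition purely on $(s,i,t)$; enumerating the integer solutions of $s(t-1)=it$ compatible with $s\ge t+3$ and $t+1\le i\le (s+t)/2$ yields only $(6,4,3),(9,6,3),(8,6,4)$, of which exactly $(6,4,3)$ fails, matching the first listed exclusion. In Case~(iii), $(s-i)t-s\le -1$: because $k$ can be arbitrarily large, the inequality must eventually fail; combining the slope condition with the admissibility constraints forces $(s,i,t)=(7,5,3)$, the second listed exclusion. The remaining exclusion $(7,4,3)$ surfaces naturally inside Case~(i): the slope there equals $2$, yet the reduced expression vanishes at $k_{\min}=6$, so the strict inequality fails at precisely that boundary.

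The main obstacle is the endgame of Case~(i) for $t=3$: several small triples lie close to the equality threshold of \eqref{eqn_reduced_lem32} simultaneously with the three listed exclusions, so the bookkeeping must carefully distinguish the genuine exclusions from nearby configurations in which strict inequality still holds. Once the slope-sign trichotomy pinpoints these three triples, however, the remaining verifications reduce to a finite collection of elementary numerical checks modeled on the structure of the proof of Lemma~\ref{lem_prep1}.
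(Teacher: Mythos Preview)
Your reduction $S_1-T_1=(s-i)(n-s-k+i)$ and the strict monotonicity in $n$ are correct, and the slope trichotomy on $(s-i)t-s$ is a genuinely different organization from the paper's argument. The paper never specializes $n$ or $k$: it shows directly that $g(s,i,t)=(s-i)(n+i-k-s)-s(k-i+1)$ is strictly increasing in $s$ for all admissible $(n,k,i,t)$ (using $s\le k+i-t$, $n\ge (t+1)(k-t+1)$ and $i\ge t+1$), reduces to the minimal admissible $s$ (namely $s=t+3$ when $i=t+1$, and $s=2i-t$ when $i\ge t+2$), and then finishes with a short case split on $t\ge 4$ versus $t=3$. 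Your trichotomy has the conceptual bonus that each of the three excluded triples lands in a distinct slope class---$(6,4,3)$ has slope $0$, $(7,5,3)$ has slope $-1$, and $(7,4,3)$ has slope $2$ but hits equality at $k_{\min}=6$---so the exclusions acquire structural meaning rather than emerging as numerical coincidences at the end.

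On the other hand, your Case~(i) carries essentially all the remaining work and is only sketched. After substituting $k_{\min}=s+t-i$ (note that $i\le (s+t)/2$ forces $k_{\min}=s+t-i$, not $i$) and $n=n_0$, the resulting polynomial in $(s,i,t)$ is $d\bigl[d(t-2)-t\bigr]-i(t-i+1)$ with $d=s-i$, and the ``partial-derivative monotonicities in $s$ and then in $i$'' you invoke are not as immediate as in Lemma~\ref{lem_prep1} because the substitution couples the variables. Filling this in is roughly the same effort as the paper's direct monotonicity-in-$s$ argument, which is arguably cleaner precisely because it avoids the coupled substitution and handles all slope signs uniformly. Both routes are valid; yours explains \emph{why} there are exactly three exceptions, the paper's is shorter to execute.
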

\begin{proof}
Set $g(s,i,t)=S_1-T_1-s(k-i+1)$, so that  $g(s,i,t)=(s-i)(n+i-k-s)-s(k-i+1)$ by \eqref{H1234}. We have
\begin{eqnarray*}
\frac{\partial g}{\partial s}(s,i,t)&=&n+3i-2s-2k-1\geq  n+3i-2(k+i-t)-2k-1 \\ &=& n+i+2t-4k-1\geq (t+1)(k-t+1)+3t-4k\\
&=&(t-3)(k-t)+1>0
\end{eqnarray*}
for $t\ge 3$. Here, we have used the condition  $s\le k+i-t$ for the first inequality, and the conditions $n\ge (t+1)(k-t+1)$ and $i\ge t+1$ for the second. It follows that $g(s,i,t)$ is an increasing function in $s$.

Firstly, suppose that $t\ge 4$.
Recall that $s\ge t+3$ and $s\ge 2i-t$.
For $i=t+1$, we have \[
   g(s,t+1,t)\geq g(t+3,t+1,t)=2(n-k-2)-(t+3)(k-t)\geq (t-3)(k-t)-2\geq 0.
\]
For $i\geq t+2$, we have
\[
   g(s,i,t)\geq g(2i-t,i,t)\geq g(t+4,t+2,t)=2(n-k-2)-(t+4)(k-t-1)\geq (t-4)(k-t)+6 > 0,
\]
where the second inequality follows from that $g(2i-t,i,t)=i^2 + (n - 3k + t - 2)i + t + 2kt - nt - t^2$ is increasing in $i$ on $[t+2,\infty)$.

Now suppose that $t=3$ in the sequel. Then $i\ge t+1=4$, and $n\ge 4(k-2)$.
We deduce that $s\ge 2i-3$ from $i\le\frac{s+t}{2}$, and so
\[
  g(s,i,3)\geq g(2i-3,i,3)=i^2+i(n+1-3k)-3(n-2k+2).
\]
As a quadratic equation in $i$, the last expression is an increasing function in the interval $[5,\infty)$  since $\frac{-n+3k-1}{2}\le\frac{7-k}{2}<5$. Therefore,
\begin{eqnarray*}g(s,i,3)&\geq&g(2i-3,i,3)\geq g(9,6,3)=6^2+6(n-3k+1)-3(n-2k+2)\\
&=&36+3n-12k\geq 12 \end{eqnarray*}
for $i\geq 6$ as $n\geq 4(k-2)$.
 If $i\in\{4,5\}$, then we deduce that $s\geq 8$ as $s\geq\max\{t+3, 2i-3\}$ and   $\{(7,4,3), (6,4,3),(7,5,3)\}$ are excluded from consideration.  Moreover, it follows from $s+t-i\leq k$ that $k\geq 7$ if $s\geq 8$, $t=3$ and $i=4$.
  Correspondingly,  we have
$$g(s,4,3)\geq g(8,4,3)=4n-12k+8\geq 4k-24>0$$ and $$g(s,5,3)\geq g(8,5,3)=3n-11k+23\geq k-1>0.$$
This completes the proof.\qed
\end{proof}

\begin{lemma}\label{lem_prep3}
$S_2>T_1+s(k+i-s-t+1)-(S_2-T_2)$.
\end{lemma}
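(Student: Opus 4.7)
The plan is to rewrite the target inequality in the equivalent form
\[
(S_2 - T_1) + (S_2 - T_2) > s(k+i-s-t+1),
\]
and to exploit the two identities that fall out of \eqref{H1234} after cancellation:
\[
S_2 - T_2 = (i-t)(n-k+t-i), \qquad T_2 - T_1 = (s+t-2i)(n-2k+2t).
\]
Combining these via $(S_2-T_1)+(S_2-T_2) = 2(S_2-T_2)+(T_2-T_1)$, it suffices to prove
\[
2(i-t)(n-k+t-i) + (s+t-2i)(n-2k+2t) > s(k+i-s-t+1).
\]

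I would then set $a = i-t$, $b = s+t-2i$ and $m = k-t$. The standing hypotheses translate into $a \ge 1$, $b \ge 0$, $a+b \le m$ (hence $m \ge 1$), and $n \ge (t+1)(m+1) = tm+m+t+1$. With these substitutions, $s = t+2a+b$, $k+i-s-t+1 = m-a-b+1$, $n-k+t-i = n-m-t-a$ and $n-2k+2t = n-2m$, so the target reads
\[
2a(n-m-t-a) + b(n-2m) > (t+2a+b)(m-a-b+1).
\]
The left side is linear in $n$ with positive slope $2a+b$, so it is minimized at $n = (t+1)(m+1)$. Substituting that value and expanding the right-hand side, the $a^2$-terms cancel, and organizing the remainder by powers of $m$ gives
\[
(\text{LHS}) - (\text{RHS}) \;\geq\; m\bigl[2a(t-1) + b(t-2) - t\bigr] + t(a-1) + 2tb + b^2 + 3ab.
\]
Since $a \ge 1$ and $t \ge 3$, the bracket is at least $2(t-1) - t = t-2 \ge 1$, and every remaining summand is nonnegative, so the difference is at least $m \ge 1 > 0$.

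The argument is essentially mechanical; the only delicate choices are the reformulation $(S_2-T_1)+(S_2-T_2)$ together with the closed forms for $S_2-T_2$ and $T_2-T_1$, and the substitution that exposes positivity of every surviving coefficient. A more structural alternative would be the decomposition $(S_2-T_1)+(S_2-T_2) = (S_1+S_2-T_1-T_2) - (S_1-S_2)$: apply Lemma~\ref{lem_prep1} to the first bracket and verify $S_1-S_2 = (s+t-2i)(k-s-t) < s(k-i+1)$ by comparing the two factors (using $s+t-2i < s$ and $k-s-t < k-i+1$). This route is conceptually neater but forces separate treatment of the seven triples excluded from Lemma~\ref{lem_prep1}, and that case-checking is the obstacle it would create; the direct route above sidesteps it entirely.
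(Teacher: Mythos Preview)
Your proof is correct. The identities $S_2-T_2=(i-t)(n-k+t-i)$ and $T_2-T_1=(s+t-2i)(n-2k+2t)$ check out, the substitution $(a,b,m)=(i-t,\,s+t-2i,\,k-t)$ is consistent with the standing hypotheses (in particular $a+b=s-i\le k-t=m$ follows from $i\ge s+t-k$), and after minimizing the linear-in-$n$ left side at $n=(t+1)(m+1)$ the expansion
\[
m\bigl[2a(t-1)+b(t-2)-t\bigr]+t(a-1)+2tb+b^2+3ab
\]
is exactly what drops out; with $a\ge 1$, $b\ge 0$, $t\ge 3$ every term is nonnegative and the bracket is at least $t-2\ge 1$, so the strict inequality follows.

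The paper argues differently. It sets $h(s,i,t)=S_2-\bigl(T_1+s(k+i-s-t+1)-(S_2-T_2)\bigr)$, computes $\partial h/\partial s>0$ to reduce to the minimal value $s=2i-t$, then computes $\partial h(2i-t,i,t)/\partial i>0$ to reduce further to $i=t+1$, and finally evaluates $h(t+2,t+1,t)=2(n-k-1)-(t+2)(k-t)\ge (t-2)(k-t)>0$. So the paper's route is a two-step monotonicity reduction to a single corner point, matching the style of the surrounding Lemmas~\ref{lem_prep1}, \ref{lem_prep2}, \ref{lem_prep4}. Your route instead freezes $n$ at its minimum and exhibits the difference as a polynomial in $(a,b,m)$ with manifestly nonnegative coefficients. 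The advantage of your substitution is that positivity becomes visible term-by-term with no case analysis or derivative checks; the advantage of the paper's approach is uniformity with the neighbouring lemmas and the fact that it never needs to expand fully. Your closing remark about the $(S_1+S_2-T_1-T_2)-(S_1-S_2)$ decomposition and why it would drag in the seven exceptional triples of Lemma~\ref{lem_prep1} is also correct.
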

\begin{proof}
Set $h(s,i,t)=S_2-\left(T_1+s(k+i-s-t+1)-(S_2-T_2)\right)$, so that by \eqref{H1234} we have
\begin{align*}
h(s,i,t)&=(s+t-2i)(n+t+i-2k)+(i-t)(2n+t-2k-s)-s(k+i-s-t+1)\\
&=s^2+s(n + 3t -3k - i - 1) + i(2k - 2i) - tn.
\end{align*}
We have
\begin{eqnarray*}
\frac{\partial h}{\partial s}(s,i,t)
&=& n+3t+2s-3k-i-1\geq (t+1)(k-t+1)-3(k-t+1)+2s+2-i \\
&=& (t-2)(k-t+1)+2s+2-i> 0.
\end{eqnarray*}
It follows that $h(s,i,t)$ is an increasing function in $s$. Recall that we have $s\ge 2i-t$. Then  $$h(s,i,t)\geq h(2i-t,i,t)=(i-t)(2n+2t-2k-2i)-(2i-t)(k-i+1).$$
However,
 \begin{eqnarray*}
\frac{\partial h}{\partial i}(2i-t,i,t)&=&2n-4k+3t-2\ge 2(t+1)(k-t+1)-4k+3t-2 \\
&=&2(t-1)(k-t+1)-t+2\ge 6(t-1)-t+2=5t-4>0
\end{eqnarray*}
since $k\geq t+2$.
Therefore, $h(2i-t,i,t)$ is an increasing function in $i$, and we have  $$h(s,i,t)\geq h(t+2,t+1,t)=2(n-k-1)-(t+2)(k-t)\geq (t-2)(k-t)>0.$$
This completes the proof.\qed
\end{proof}

\begin{lemma}\label{lem_prep4}
$T_2+s(k+i-s-t+1) \le s(n-s+1)$.
\end{lemma}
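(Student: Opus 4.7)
The plan is to reduce the inequality to elementary algebra via two substitutions and a single easy bound on $n-2k$.

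First, I would subtract $s(k+i-s-t+1)$ from both sides; since $(n-s+1)-(k+i-s-t+1)=n-k-i+t$, the claim becomes $T_2\le s(n-k-i+t)$. Writing $s=(s+t-i)+(i-t)$ and distributing on the right, then comparing with the definition of $T_2$ termwise, most things cancel and the statement reduces to the equivalent inequality
\[
(s+t-i)\le(i-t)\bigl(n-2k-2i+s+2t-1\bigr).
\]

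Second, I would substitute $u:=i-t$ and $v:=s+t-i$, so $u+v=s$. The condition $i\ge t+1$ gives $u\ge 1$, and $i\le(s+t)/2$ gives $v-u=s+2t-2i\ge t$, hence $v\ge u+t$. Noting that the bracket on the right above is just $n-2k-u-1+v$, the inequality rearranges to
\[
u(n-2k-u-1)+v(u-1)\ge 0.
\]

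The only auxiliary fact I need is $n\ge 2k+2$. The constraint $t+3\le s\le 2k-t$ forces $k\ge t+2$, and then
\[
(t+1)(k-t+1)-(2k+2)=(t-1)k-t^2-1\ge(t-1)(t+2)-t^2-1=t-3\ge 0,
\]
so $n\ge(t+1)(k-t+1)\ge 2k+2$. Combined with $v\ge u+t$ and $u\ge 1$,
\[
u(n-2k-u-1)+v(u-1)\;\ge\; u(1-u)+(u+t)(u-1)\;=\;(u-1)t\;\ge\;0,
\]
which finishes the proof. I do not foresee a genuine obstacle: everything is bookkeeping and two elementary bounds. The only point that needs a little care is the step $v(u-1)\ge(u+t)(u-1)$, which requires $u\ge 1$ to preserve the inequality direction. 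Equality holds exactly when $u=1$ and $n=2k+2$, matching the tight boundary instance $(s,i,t)=(6,4,3)$, $k=5$, $n=12$.
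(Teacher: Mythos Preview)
Your proof is correct. Both you and the paper begin with the same algebraic reduction: the paper rewrites the claim as $(i-t)(n-2k+s+2t-2i)-s\ge 0$, which after your substitution $u=i-t$, $v=s+t-i$ is exactly your $u(n-2k-u-1)+v(u-1)\ge 0$. From there the two arguments diverge. The paper plugs in the full hypothesis $n\ge(t+1)(k-t+1)$, observes that the resulting expression $\varphi(k,s,i,t)$ is concave in $i$, and then verifies nonnegativity at the two endpoints $i=t+1$ and $i=(s+t)/2$ separately. You instead extract only the weaker consequence $n\ge 2k+2$ and the bound $v\ge u+t$, and finish by a direct two-term estimate. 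Your route is shorter, avoids the concavity/endpoint case split, and as a bonus pins down the equality case $(s,i,t,k,n)=(6,4,3,5,12)$ transparently, which the paper's proof does not isolate. The one place to be slightly careful is exactly the one you flag: the step $v(u-1)\ge(u+t)(u-1)$ needs $u\ge 1$, which you have.
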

\begin{proof}
Recall that $n\ge (t+1)(k-t+1)$ and $i\ge t+1$.
We have $$s(n-s+1)-T_2 -s(k+i-s-t+1)=(i-t)(n-2k+s+2t-2i)-s\geq (i-t)[(t+1)(k-t+1)-2k+s+2t-2i]-s.$$
Set $\varphi(k,s,i,t)=(i-t)((t+1)(k-t+1)-2k+s+2t-2i)-s$.
Note that  $\frac{\partial^2 \varphi}{\partial i^2}(k,s,i,t)=-4$, it follows that $\varphi(k,s,i,t)$ is a concave function in $i$ in $(-\infty,+\infty)$. Hence, $$\varphi(k,s,i,t)\geq \min\{\varphi(k,s,t+1,t),\varphi(k,s,(s+t)/2,t)\}$$ for $t+1\leq i\leq \frac{s+t}{2}$.
To complete the proof,  it suffices  to verify that
$\varphi(k,s,t+1,t)\geq 0$ and $\varphi(k,s,(s+t)/2,t) \geq 0$.
However,
\begin{eqnarray*}
\varphi(k,s,t+1,t)&=& (t+1)(k-t+1)-2k-2=-t^2+ kt - k - 1 \\
&\ge&-t^2 + (t+2)t - (t+2) - 1=t-3\geq 0,
\end{eqnarray*}
where the first inequality follows from $k\ge t+2$;
\begin{eqnarray*}
\varphi(k,s,(s+t)/2,t)&=&(st-s - t^2+t)k + st - s - st^2+ t^3 - t^2 - t  \\
&\ge& ((2t - 3)s - 2t^2 + t)/2
\ge (4t-9)/2>0,
\end{eqnarray*}
where the inequalities follow from $k\geq t+2$, $s\ge t+3$ and $t\ge 3$.
 This completes the proof.\qed
\end{proof}

We are now ready to prove Theorem \ref{lemmakeyineq}.

\noindent\textbf{Proof of Theorem \ref{lemmakeyineq}.} The right-hand side of \eqref{ff1} is equal to $\frac{(k-i+1)(k+i+1-s-t)}{(n+i-k-s)(n+t-k-i)}$ upon simplification, so \eqref{ff1} reduces to
\begin{eqnarray}\label{ff11}
  \frac{(n+i-k-s)(n+t-k-i)S_1 S_2}{(n-s+1)^2 T_1 T_2}>1.
\end{eqnarray}
We have $i\le\frac{s+t}{2}\le k$ by the assumption $s\le 2k-t$. By \eqref{H1234} we have $S_1=s(n-s+1)-i(k-i)< s(n-s+1)$, i.e., $\frac{S_1}{n-s+1}<s=\frac {s(k-i+1)}{k-i+1}$. By Lemma \ref{basefact}, we deduce that $(n-k-s+i)S_1> ((n-k-s+i)+(k-i+1))(S_1-s(k-i+1))$, i.e.,
\begin{eqnarray}\label{equa1}
   (n-k-s+i)S_1>(n-s+1)(S_1-s(k-i+1)).
\end{eqnarray}
Similarly, we have $S_2=s(n-s+1)-(s+t-i)(k+i-s-t)< s(n-s+1)$ and then by Lemma \ref{basefact} we obtain
$(n-k-i+t)S_2>(n-s+1)\left(S_2-s(k+i-s-t+1)\right)$. We have $S_1-s(k-i+1)>T_1$ by Lemma \ref{lem_prep2}.
If $S_2-s(k+i-s-t+1)\ge T_2$, then
 \begin{small}\begin{eqnarray*}
(n-k-s+i)S_1\cdot(n-k-i+t)S_2&>&(n-s+1)\left(S_1-s(k-i+1)\right)(n-s+1)(S_2-s(k+i-s-t+1)) \\
&>&(n-s+1)^2T_1T_2,
\end{eqnarray*}\end{small}
and the desired result follows. It remains to consider the case
\begin{eqnarray}\label{equa3}
S_2-T_2<s(k+i-s-t+1).
 \end{eqnarray}
First suppose that $(s,i,t)$ is not one of $(8,6,5)$, $(7,5,4)$, $(8,5,3)$, $(8,4,3)$, $(7,5,3)$, $(7,4,3)$ and $(6,4,3)$.
By Lemma \ref{lem_prep1} we have
\begin{eqnarray}\label{equac2}
  S_1-s(k-i+1)\geq T_1+s(k+i-s-t+1)-(S_2-T_2).
\end{eqnarray}
By Lemma \ref{lem_prep3} we have $S_2-\left(T_1+s(k+i-s-t+1)-(S_2-T_2)\right)>0$. Together with \eqref{equa3}, we deduce that
\begin{eqnarray}\label{st}
T_1+s(k+i-s-t+1)-(S_2-T_2)<S_2<T_2 + s(k+i-s-t+1).
\end{eqnarray}
By (\ref{equa3}) and (\ref{st}), we have $S_2 > T_1$, so
\begin{eqnarray}\nonumber
[T_1+s(k+i-s-t+1)-(S_2-T_2)]S_2&>&T_1[S_2 + s(k+i-s-t+1)-(S_2-T_2)]\\ \label{equac1}&=&T_1[T_2+s(k+i-s-t+1)].
 \end{eqnarray}
By Lemma \ref{lem_prep4}, we have $\frac{T_2+s(k+i-s-t+1)}{n-s+1} \le s$. By Lemma \ref{basefact}, this can be reformulated as
\begin{eqnarray}\label{equac3}
  (n-k-i+t)[T_2+s(k+i-s-t+1)]\geq(n-s+1)T_2.
\end{eqnarray}
We then have
\begin{eqnarray*}
&&(n-k-s+i)(n-k-i+t)S_1S_2   \\
&\overset{\text{(\ref{equa1})}}{>}& (n-k-i+t)(n-s+1)[S_1-s(k-i+1)]S_2  \\
&\overset{\text{(\ref{equac2})}}{\ge}& (n-k-i+t)(n-s+1)\left(T_1+s(k+i-s-t+1)-(S_2-T_2)\right)S_2 \\
&\overset{\text{(\ref{equac1})}}{>}& (n-k-i+t)(n-s+1)T_1[T_2+s(k+i-s-t+1)]  \\
&\overset{\text{(\ref{equac3})}}{\ge}& (n-s+1)^2T_1T_2.
\end{eqnarray*}
It remains to consider the cases where $(s,i,t)$ is one of $(8,6,5)$, $(7,5,4)$, $(8,5,3)$, $(8,4,3)$, $(7,5,3)$, $(7,4,3)$ and $(6,4,3)$. For each tuple $(n,k,s,i,t)$, we define
\[
  T(n,k,s,i,t)=\frac{(n-k-s+i)(n-k-i+t)S_1 S_2}{(n-s+1)^2 T_1 T_2}.
\]
We only give details for the triple $(8,6,5)$ here, since the other triples are handled similarly (the proofs for other cases are in the appendix).
Suppose that $(s,i,t)=(8,6,5)$. We have  $k\geq s+t-i=7$ and $n\ge 6(k-4)$.
Since $T(18,7,8,6,5)=615/572>1$, we can assume that $k\ge 8$ and $n\ge 6(k-4)$, or $k=7$ and $n\ge 19$.
We have
\[
  (8n-6k-20)(8n-7k-7)-(8n-4.5k-20)(8n-8.5k-7)=3k(5k - 26)/4>0.
\]
It follows that
 \begin{eqnarray*}
 T(n,k,8,6,5)&=&\frac{(n-k-2)(n-k-1)(8n-6k-20)(8n-7k-7)}
{(n-7)^2(6n-4k-16)(7n-6k-6)}\\
&\geq&\frac{(n-k-2)(n-k-1)(8n-4.5k-20)(8n-8.5k-7)}
{(n-7)^2(6n-4k-16)(7n-6k-6)}.
\end{eqnarray*}
On the one hand,
\begin{eqnarray*}
(n-k-2)(8n-8.5k-7)-(n-7)(6n-4k-16)&=&(4n^2  - 25kn  +  70n +17k^2- 8k- 196)/2\\
&\geq&(11k^2 - 140k + 428)/2 >0
\end{eqnarray*}
for $k\ge 8$ and $n\ge 6(k-4)$,
and $(4n^2  - 25kn  +  70n +17k^2- 8k- 196)/2\ge 15>0$ for $k=7$ and $n\ge 19$.
On the other hand,
\begin{eqnarray*}(n-k-1)(8n-4.5k-20)-(n-7)(7n-6k-6)&=&( 2n^2  - 13kn + 54n+9k^2 - 35k - 44)/2\\ &\geq& (3k^2 + 25k - 188)/2>0.\end{eqnarray*}
We thus conclude that $T(n,k,8,6,5)>1$ as desired. This completes the proof of Theorem \ref{lemmakeyineq}.\qed

\section{Proof of Theorem \ref{maintheo}}\label{section4}
Suppose $n\geq (t+1)(k-t+1)$ and $t\geq 3$.
Let $\mathcal A$ and $\mathcal B$ be two left-compressed cross-$t$-intersecting families of $\binom{[n]}{k}$ such that $|\mathcal A||\mathcal B|$ is maximum,
$g(\mathcal A)\in G_*(\mathcal A)$ and $g(\mathcal B)\in G_*(\mathcal B)$  the generating sets of $\mathcal A$ and $\mathcal B$, respectively.
Clearly, \[\mbox{$|\mathcal A||\mathcal B|\geq \binom{n-t}{k-t}^2\geq |\mathcal F(n,k,t,1)|^2.$}\]
Set $s=\max\{s^+(g(\mathcal A)),s^+(g(\mathcal B))\}$.
If $s=t$, as  $g(\mathcal A)$ and $g(\mathcal B)$ are cross-$t$-intersecting, we immediately obtain that $g(\mathcal A)=g(\mathcal B)=\{[t]\}$, hence \[\mbox{$\mathcal A=\mathcal B=\left\{A\in\binom{[n]}{k}: [t]\subset A \right\}$ and $|\mathcal A||\mathcal B|=\binom{n-t}{k-t}^2.$}\]
So we suppose $s\geq t+1$ in the sequel.  Let $i$ be the minimum integer such that $g^*_i(\mathcal A)\neq\emptyset$.   Then $g^*_{s+t-i}(\mathcal B)\neq\emptyset$ by (ii) of Lemma \ref{lemmag1}. By symmetry, we may assume $i\leq (s+t)/2$, that is, $s\geq 2i-t$. If $i=t$, then $g(\mathcal A)=\binom{[s]}t$ and $g(\mathcal B)=\{[s]\}$ holds by the assumption that $|\mathcal A||\mathcal B|$ is maximum.
Hence\[\mbox{$|\mathcal A|=\sum_{t\leq i\leq s}\binom{s}{i}\binom{n-s}{k-i}$ and $|\mathcal B|=\binom{n-s}{k-s}$.}\]
Set $\mathcal A_1=\{A\in \binom{[n]}{k}: |A\cap [s-1]|\geq t\}$ and $\mathcal B_1=\{B\in\binom{[n]}{k}: [s-1]\subset B\}$.
Clearly, $\mathcal A_1$ and $\mathcal B_1$ are cross-$t$-intersecting families with
\[\mbox{$|\mathcal A_1|=\sum_{t\leq i\leq s-1}\binom{s-1}{i}\binom{n-s+1}{k-i}$ and $|\mathcal B_1|=\binom{n-s+1}{k-s+1}$.}\]
Then,  $|\mathcal A|-|\mathcal A_1|=\binom{s-1}{t-1}\binom{n-s}{k-t}$ and $|\mathcal B_1|-|\mathcal B|=\binom{n-s}{k-s+1}$.   It follows from the assumption  $n\geq (t+1)(k-t+1)$ that  $$s(n-k)-t(n-s+1)=(s-t)n-s(k-t)-t>t(s-t-1)(k-t+1)>0,$$ and so we obtain
\begin{eqnarray*}\frac{\binom{s-1}{t-1}\binom{n-s+1}{k-s+1}}{\binom{s}{t}\binom{n-s}{k-s+1}}=\frac{t(n-s+1)}{s(n-k)}<1,\end{eqnarray*}
that is, \[\binom{s-1}{t-1}\binom{n-s+1}{k-s+1}<\binom{s}{t}\binom{n-s}{k-s+1}.\]
Therefore,
  \begin{eqnarray*}|\mathcal A_1||\mathcal B_1|-|\mathcal A||\mathcal B|&=&\binom{n-s}{k-s+1}\sum_{t\leq i\leq s}\binom{s}{i}\binom{n-s}{k-i}-\binom{s-1}{t-1}\binom{n-s}{k-t}\binom{n-s+1}{k-s+1}\\&>&\binom{n-s}{k-t}\left(\binom{s}{t}\binom{n-s}{k-s+1}-\binom{s-1}{t-1}
 \binom{n-s+1}{k-s+1}\right)>0,\end{eqnarray*}
 contradicting the maximality of $|\mathcal A||\mathcal B|$. So, in the following, we assume $i\geq t+1$ and $s\geq 2i-t\geq t+2$. We will distinguish three cases to complete the proof.
\medskip
\newline
\textbf{Case 1}: $s=t+2$. Then, $i=t+1$, $g^*_{t+1}(\mathcal A)\neq\emptyset$ and $g^*_{t+1}(\mathcal B)\neq\emptyset$. Suppose $[t]\in g(\mathcal A)\cup g(\mathcal B)$. Furthermore, we may assume $[t]\in g(\mathcal A)$. If $[t]\in g(\mathcal B)$, then $g(\mathcal A)=g(\mathcal B)=\{[t]\}$, which implies $s=s^+(g(\mathcal A))=s^+(g(\mathcal B))=t$, yielding a contradiction. Therefore, $[t]\not\in g(\mathcal B)$, and it follows that \[\mbox{$g(\mathcal A)=\{[t]\}\cup\{ [t+2]\setminus \{i\}: 1\leq i\leq t\}$ and $g(\mathcal B)=\{[t+1],[t+2]\setminus\{t+1\}\}$}.\]
Hence, \[\mbox{$\mathcal A=\{A\in\binom{[n]}{k}: \mbox{$[t]\subset A$ or $|A\cap [t+2]|\geq t+1$}\}$ and $\mathcal B=\{B\in\binom{[n]}{k}: \mbox{$[t]\subset B$ and $|B\cap [t+2]|\geq t+1$}\}$}.\]
 Note that $t\binom{n-t-2}{k-t-1}\leq\binom{n-t-2}{k-t}$ when $n\geq (t+1)(k-t+1)$. It follows
\[|\mathcal A||\mathcal B|=\left(\binom{n-t}{k-t}+t\binom{n-t-2}{k-t-1}\right)\left(\binom{n-t}{k-t}-\binom{n-t-2}{k-t}\right)<\binom{n-t}{k-t}^2,\]
yielding a contradiction. Therefore, $[t]\not\in g(\mathcal A)\cup g(\mathcal B)$, which implies $|F|\geq t+1$ for all  $F\in g(\mathcal A)\cup g(\mathcal B)$. It follows that  $g(\mathcal A)=g(\mathcal B)=\binom{[t+2]}{t+1}$.  Hence $\mathcal A=\mathcal B=\mathcal F(n,k,t,1)=\{A\in \binom{[n]}{k}: |A\cap [t+2]|\geq t+1\}$, and the maximality of $|\mathcal A||\mathcal B|$ implies $\mathcal F(n,k,t,1)$ is a  maximum $t$-intersecting subfamily of $\binom{[n]}k$, then $n=(t+1)(k-t+1)$.
\medskip
\newline
\textbf{Case 2}: $s\geq t+3$ and $(s,i,t)\neq (6,4,3)$.  Then, $g^*_{s+t-i}(\mathcal B)\neq\emptyset$ holds by (ii) of Lemma (\ref{lemmag1}). Set $\mathcal A_1=\mathcal A\cup \mathcal D(g^{*}_{i}(\mathcal A)')$ and $\mathcal B_1=\mathcal B\setminus \mathcal D(g^{*}_{s+t-i}(\mathcal B))$.  Then,  $\mathcal A_1$ and $\mathcal B_1$ are also cross-$t$-intersecting families of $\binom{[n]}{k}$, by (iii) of Lemma (\ref{lemmag1})  and the maximality of $|\mathcal A||\mathcal B|$, we have
\begin{eqnarray*}
|\mathcal A_1||\mathcal B_1|=\left(|\mathcal A|+|g^*_i(\mathcal A)|\binom{n-s}{k-i+1}\right)\left(|\mathcal B|-|g^*_{s+t-i}(\mathcal B)|\binom{n-s}{k-s-t+i}\right)\leq|\mathcal A||\mathcal B|.
\end{eqnarray*}
By Lemma \ref{basefact}, we obtain
\begin{eqnarray}
\label{ineq31}
\frac{|\mathcal B|}{|\mathcal A|+|g^{*'}_{i}(\mathcal A)|\binom {n-s}{k-i+1}}
\leq \frac{|g^{*}_{s+t-i}(\mathcal B)|\binom {n-s}{k-s-t+i}}{|g^{*'}_{i}(\mathcal A)|\binom {n-s}{k-i+1}}.
\end{eqnarray}
 Set $\mathcal A_2=\mathcal A\setminus\mathcal D(g^{*}_{i}(\mathcal A))$ and $\mathcal B_2=\mathcal B\cup \mathcal D(g^{*}_{s+t-i}(\mathcal B)')$.   Similarly,  $\mathcal A_2$ and $\mathcal B_2$ are also cross-$t$-intersecting families with
\begin{eqnarray*}
|\mathcal A_2||\mathcal B_2|=\left(|\mathcal A|-|g^*_i(\mathcal A)|\binom{n-s}{k-i}\right)\left(|\mathcal B|+|g^*_{s+t-i}(\mathcal B)|\binom{n-s}{k-s-t+i+1}\right)\leq|\mathcal A||\mathcal B|,
\end{eqnarray*}
and so
\begin{eqnarray}
\label{ineq32}
\frac{|\mathcal A|}{|\mathcal B|+|g^{*}_{s+t-i}(\mathcal B)|\binom {n-s}{k-s-t+i+1}}
\leq \frac{|g^*_i(\mathcal A)|\binom {n-s}{k-i}}{|g^*_{s+t-i}(\mathcal B)|\binom {n-s}{k-s-t+i+1}}
\end{eqnarray}
holds by Lemma \ref{basefact}. Combining with (\ref{ineq31}) and (\ref{ineq32}), we have
\begin{eqnarray}
\label{ineq33}
\frac{|\mathcal A|} {|\mathcal A|+|g^*_i(\mathcal A)|\binom {n-s}{k-i+1}}
\frac{|\mathcal B|}{|\mathcal B|+|g^*_{s+t-i}(\mathcal B)|\binom {n-s}{k-s-t+i}}
\leq \frac{\binom {n-s}{k-i} \binom {n-s}{k-s-t+i}}{\binom {n-s}{k-i+1}\binom {n-s}{k-s-t+i+1}}.
\end{eqnarray}
Set $\nabla(g_i^*(\mathcal A)')=\{F\in\binom{[s-1]}{i}: \mbox{$E\subset F$ for some $E\in g_i^*(\mathcal A)'$}\}$. Then $\mathcal D(\nabla(g_i^*(\mathcal A)'))\subseteq \mathcal A$ and
\[
\frac{|\nabla(g_i^*(\mathcal A)')|}{|g_i^*(\mathcal A)|}=\frac{|\nabla(g_i^*(\mathcal A)')|}{|g_i^*(\mathcal A)'|}\geq \frac{\binom{s-1}{i}}{\binom{s-1}{i-1}}
\]
 holds by Theorem \ref{nmm}, and so we obtain

\begin{eqnarray*}|\mathcal A| \geq \mathcal D(\nabla(g_i^*(\mathcal A)')\cup g^*_i(\mathcal A))\geq \frac{|g^*_i(\mathcal A)|}{\binom{s-1}{i-1}}\left(\binom{s-1}{i-1}\binom{n-s}{k-i}+\binom{s-1}{i}\binom{n-s+1}{k-i}\right). \end{eqnarray*}
Hence
\begin{eqnarray*}\frac{|\mathcal A|}{|\mathcal A|+|g^*_i(\mathcal A)|\binom{n-s}{k-i+1}}\geq\frac{\binom{s-1}{i-1}\binom{n-s}{k-i}+\binom{s-1}{i}\binom{n-s+1}{k-i}}
{\binom{s-1}{i-1}\binom{n-s+1}{k-i+1}+\binom{s-1}{i}\binom{n-s+1}{k-i}}=\frac{(k-i+1)S_1}{(n-s+1)T_1}.
 \end{eqnarray*}
Similarly,
\begin{eqnarray*}\frac{|\mathcal B|}{|\mathcal B|+|g^*_{s+t-i}(\mathcal B)|\binom{n-s}{k+i+1-s-t}}\geq\frac{\binom{s-1}{s+t-i-1}\binom{n-s}{k+i-s-t}+\binom{s-1}{s+t-i}\binom{n-s+1}{k+i-s-t}}
{\binom{s-1}{s+t-i-1}\binom{n-s+1}{k+i+1-s-t}+\binom{s-1}{s+t-i}\binom{n-s+1}{k+i-s-t}}=\frac{(k+i+1-s-t)S_2}{(n-s+1)T_2}.
 \end{eqnarray*}
Consequently, we have
\begin{small}
\[\frac{(k-i+1)(k+i+1-s-t)S_1S_2}{(n-s+1)^2T_1T_2}\leq \frac{|\mathcal A|} {|\mathcal A|+|g^*_i(\mathcal A)|\binom {n-s}{k-i+1}}
\frac{|\mathcal B|}{|\mathcal B|+|g^*_{s+t-i}(\mathcal B)|\binom {n-s}{k-s-t+i}}\leq \frac{\binom {n-s}{k-i} \binom {n-s}{k-s-t+i}}{\binom {n-s}{k-i+1}\binom {n-s}{k-s-t+i+1}}.\]\end{small}
 However, as $(s,i,t)\neq (6,4,3)$, by Theorem \ref{lemmakeyineq}, we have  \[\frac{(k-i+1)(k+i+1-s-t)S_1S_2}{(n-s+1)^2T_1T_2}>\frac{\binom{n-s}{k-i}\binom {n-s}{k-s-t+i}}{\binom {n-s}{k-i+1}\binom {n-s}{k-s-t+i+1}},\]
 yielding a contradiction.
\medskip
\newline
  \textbf{Case 3:} $(s,i,t)=(6,4,3)$. Then $g_4^*(\mathcal A)\neq\emptyset$ and $g^*_5(\mathcal B)\neq\emptyset$.  We distinguish three cases to complete the proof. For simplicity, we denote $\{i_1,i_2,...,i_{m}\}$ as $i_1i_2...i_{m}$ in the following.
\medskip
\newline
\textbf{ Subcase 3.1:} $|g_3(\mathcal A)|\geq 2$. By (i) of Lemma \ref{lemmag1},  we have  $\{123, 124\}\subset g(\mathcal A)$. Note that
$g(\mathcal A)$ and $g(\mathcal B)$ are cross-$3$-intersecting, we have $[4]\subset F$ for all $F\in g(\mathcal B)$. It follows from $g^*_5(\mathcal B)\neq \emptyset$ that $g(\mathcal B)=\{12345, 12346\}$. Moreover, by the maximality of $|\mathcal A||\mathcal B|$, we get that  $$g(\mathcal A)=\{123,124,134,234,1256,1356,1456,2356,2456,3456\}.$$
Then, $|\mathcal A|=4\binom{n-4}{k-3}+\binom{n-4}{k-4}+6\binom{n-6}{k-4}$ and $|\mathcal B|=2\binom{n-6}{k-5}+\binom{n-6}{k-6}$. Set $\mathcal A_1=\{A\in\binom{[n]}{k}: |A\cap [4]|\geq 3\}$ and $\mathcal B_1=\{B\in\binom{[n]}{k}: [4]\subset B\}$, it is clear that $\mathcal A_1$ and $\mathcal B_1$ are cross-3-intersecting families with $|\mathcal A_1|=\binom{n-4}{k-4}+4\binom{n-4}{k-3}$ and $|\mathcal B_1|=\binom{n-4}{k-4}$. Then
\begin{eqnarray*}|\mathcal A_1||\mathcal B_1|-|\mathcal A||\mathcal B|=|\mathcal A_1|\left(|\mathcal B|+\binom{n-6}{k-4}\right)-\left(|\mathcal A_1|+6\binom{n-6}{k-4}\right)|\mathcal B|=(|\mathcal A_1|-6|\mathcal B|)\binom{n-6}{k-4}.\end{eqnarray*}
 Note that \begin{eqnarray}\label{eq2}\frac{\binom{m}{j}}{\binom{m}{j-1}}=\frac{m-j+1}{j}>3\end{eqnarray}
 if $m\geq 4j$.
  By the assumption $n\geq 4(k-2)$, $|\mathcal A_1|=\binom{n-4}{k-4}+4\binom{n-4}{k-3}>13\binom{n-4}{k-4}>13|\mathcal B|$. Therefore, $|\mathcal A_1||\mathcal B_1|>|\mathcal A||\mathcal B|$,
contradicting the maximality of $|\mathcal A||\mathcal B|$.
\medskip
\newline
\textbf{Subcase 3.2:} $|g_3(\mathcal A)|=1.$ Then $g_3(\mathcal A)=\{[3]\}$, which implies $[3]\subset F$ for all $F\in g(\mathcal B)$. Moreover, $1246\in g^*_4(\mathcal A)$ and $12356\in g^*_5(\mathcal B)$ since $g^*_4(\mathcal A)\neq\emptyset$ and $124\not\in g(\mathcal A)$. Then $123\not\in g(\mathcal B)$. Therefore, $g_3(\mathcal B)=\emptyset$ and   either $g_4(\mathcal B)=\{[4]\}$ or $\emptyset$.

If $g_4(\mathcal B)=\emptyset$, since $124\not\in g(\mathcal A)$ and $123\in g(\mathcal A)$,  it follows that $g(\mathcal B)=\{12346, 12345, 12356\}$. By the maximality of $|\mathcal A||\mathcal B|$, we have  $$g(\mathcal A)=\{123,1245,1246,1256,1345,1346,1356,1456, 2345,2346,2356,2456,3456\}.$$ Hence,
\[\mbox{$|\mathcal A|=\binom{n-6}{k-6}+6\binom{n-6}{k-5}+15\binom{n-6}{k-4}+\binom{n-6}{k-3}$ and $|\mathcal B|=\binom{n-6}{k-6}+3\binom{n-6}{k-5}$.}\]
  Set $\mathcal A_1=\{A\in\binom{[n]}{k}: \mbox{$|A\cap [5]|\geq 4$ or $[3]\subset A$}\}$ and $\mathcal B_1=\{B\in\binom{[n]}{k}: \mbox{$|B\cap [5]|\geq 4$ and $[3]\subset B$}\}$. Clearly, $\mathcal A_1$ and $\mathcal B_1$ are cross-$3$-intersecting with
  \[\mbox{$|\mathcal A_1|=\binom{n-5}{k-5}+5\binom{n-5}{k-4}+\binom{n-5}{k-3}=|\mathcal A|-9\binom{n-6}{k-4}$ and $|\mathcal B_1|=2\binom{n-5}{k-4}+\binom{n-5}{k-5}=|\mathcal B|+2\binom{n-6}{k-4}$.}\]
  By the assumption $n\geq 4(k-2)$ and (\ref{eq2}),  $\binom{n-6}{k-4}>\binom{n-6}{k-5}>\binom{n-6}{k-6}$ and $\binom{n-6}{k-4}>3\binom{n-6}{k-5}$.
Therefore,
\begin{eqnarray*}
|\mathcal A_1||\mathcal B_1|-|\mathcal A||\mathcal B|&=&2\binom{n-6}{k-4}\left(\binom{n-6}{k-6}+6\binom{n-6}{k-5}+15\binom{n-6}{k-4}+\binom{n-6}{k-3}\right)\\& &-9\binom{n-6}{k-4}\left(\binom{n-6}{k-6}+3\binom{n-6}{k-5}\right)-18\binom{n-6}{k-4}^2\\&=&\binom{n-6}{k-4}\left(-7\binom{n-6}{k-6}-15\binom{n-6}{k-5}+12\binom{n-6}{k-4}+2\binom{n-6}{k-3}\right)>0,
\end{eqnarray*}
 contradicting the maximality of $|\mathcal A||\mathcal B|$.

 If $g_4(\mathcal B)=\{[4]\}$, since $123\in g(\mathcal A)$ and $12356\in g^*_5(\mathcal B)$,  we get that
 $g(\mathcal B)=\{1234,12356\}.$ By the maximality of $|\mathcal A||\mathcal B|$, we have
$g(\mathcal A)=\{123,1245, 1246, 1345, 1346, 2345,2346\}$.
 Hence,
 \[|\mathcal A|=\binom{n-6}{k-3}+9\binom{n-6}{k-4}+6\binom{n-6}{k-5}+\binom{n-6}{k-6} \  {\rm and} \  |\mathcal B|=\binom{n-6}{k-4}+3\binom{n-6}{k-5}+\binom{n-6}{k-6}.\]
 Set $\mathcal A_1=\mathcal A\setminus\mathcal D(g^*_4(\mathcal A))$ and $\mathcal B_1=\mathcal B\cup \mathcal D(g^*_5(\mathcal B)')$, where $g^*_4(\mathcal A)=\{1246,1346,2346\}$ and $g^*_5(\mathcal B)=\{12356\}$. Clearly, $\mathcal A_1$ and $\mathcal B_1$ are cross-3-intersecting with
 \[|\mathcal A_1|=|\mathcal A|-3\binom{n-6}{k-4} \ {\rm and}\  |\mathcal B_1|=|\mathcal B|+\binom{n-6}{k-4}.\]
 By the assumption $n\geq 4(k-2)$ and (\ref{eq2}),  we also have
 \begin{eqnarray*}
 |\mathcal A_1||\mathcal B_1|-|\mathcal A||\mathcal B|=\binom{n-6}{k-4}\left(\binom{n-6}{k-3}+3\binom{n-6}{k-4}-3\binom{n-6}{k-5}-2\binom{n-6}{k-6}\right)>0,
 \end{eqnarray*}
 contradicting the maximality of $|\mathcal A||\mathcal B|$.

 \medskip
\noindent
\textbf{Subcase 3.3:} $g_3(\mathcal A)=\emptyset$. Then  $1236\in g^*_4(\mathcal A)$ since $g^*_4(\mathcal A)\neq\emptyset$. If $g_3(\mathcal B)\neq \emptyset$,  then  $123\in g_3(\mathcal B)$, which implies $123\subseteq F$ for each $F\in g(\mathcal A)$. It follows  that
$g(\mathcal A)=\{1234,1235,1236\}$ and $g(\mathcal B)=\{123, 12456, 13456,23456\}$. By the assumption $n\geq 4(k-2)$ and (\ref{eq2}), $\binom{n-6}{k-3}>3\binom{n-6}{k-4}>9\binom{n-6}{k-5}$, and then
 \[|\mathcal A||\mathcal B|=\left(\binom{n-3}{k-3}-\binom{n-6}{k-3}\right)\left(\binom{n-3}{k-3}+3\binom{n-6}{k-5}\right)<\binom{n-3}{k-3}^2,\]
  yielding a contradiction. Now, we suppose $g_3(\mathcal B)=\emptyset$. Then $g_3(\mathcal A)=g_3(\mathcal B)=\emptyset$, which implies $|C\cap [6]|\geq 4$ for each $C\in\mathcal A\cup \mathcal B$. Hence $\mathcal D(\binom{[6]}{5})\subset \mathcal A\cap \mathcal B$ by the maximality of $\mathcal A$ and $\mathcal B$.
 If $g_4(\mathcal B)=\emptyset$, we immediately have  $\mathcal A=\{A\in \binom{[n]}{k}: |A\cap[6]|\geq 4\}$ and $\mathcal B=\{B\in \binom{[n]}{k}: |B\cap[6]|\geq 5\}$.
 Similarly,
\begin{small}
\begin{eqnarray*}
 |\mathcal A||\mathcal B|-|\mathcal F(n,k,3,1)|^2&=& \left(|\mathcal F(n,k,3,1)|+ 10\binom {n-6}{k-4} \right) \left(|\mathcal F(n,k,3,1)|-5\binom {n-6}{k-4} \right)-|\mathcal F(n,k,3,1)|^2 \\
&=& 5\binom {n-6}{k-4}\left(5\binom {n-5}{k-4}+\binom {n-5}{k-5}-10\binom {n-6}{k-4}\right) \\
&=& 5\binom {n-6}{k-4}\binom {n-5}{k-5}\left(1-\frac {5(n-k)(n-2k+3)}{(k-4)(n-5)}\right)<0,
 \end{eqnarray*}
\end{small}
\newline
contradicting the maximality of $|\mathcal A||\mathcal B|$.
 Suppose $g_4(\mathcal B)\neq\emptyset$. Then, $g_4(\mathcal A)$ and $g_4(\mathcal B)$ are nonempty cross-$3$-intersecting families of $\binom{[6]}{4}$, and
\[\mbox{$|\mathcal A|=|g_4(\mathcal A)|\binom{n-6}{k-4}+6\binom{n-6}{k-5}+\binom{n-6}{k-6}$, $|\mathcal B|=|g_4(\mathcal B)|\binom{n-6}{k-4}+6\binom{n-6}{k-5}+\binom{n-6}{k-6}$}.\] By Theorem \ref{wzn}, $|g_4(\mathcal A)|+|g_4(\mathcal B)|\leq 10$, and so
\begin{eqnarray*}
|\mathcal A||\mathcal B|&=&\left(|g_4(\mathcal A)|\binom{n-6}{k-4}+6\binom{n-6}{k-5}+\binom{n-6}{k-6}\right)\left(|g_4(\mathcal B)|\binom{n-6}{k-4}+6\binom{n-6}{k-5}+\binom{n-6}{k-6}\right)\\ &\leq&
\left(5\binom{n-6}{k-4}+6\binom{n-6}{k-4}+\binom{n-6}{k-6}\right)\left(5\binom{n-6}{k-4}+6\binom{n-6}{k-5}+\binom{n-6}{k-6}\right)\\ &=&
\left(5\binom{n-5}{k-4}+\binom{n-5}{k-5}\right)^2=|\mathcal F(n,k,3,1)|^2,
\end{eqnarray*}
equality holds if and only if  $g_4(\mathcal A)=g_4(\mathcal B)=\binom{T}{4}$ for some $T\in\binom{[6]}{5}$. However, $g_3(\mathcal A)=g_3(\mathcal B)=\emptyset$, by (i) of Lemma   \ref{lemmag1}, we can deduce that   $g_4(\mathcal A)$ and $g_4(\mathcal B)$ are both left-compressed, hence $g_4(\mathcal A)=g_4(\mathcal B)=\binom{[5]}{4}$, which implies $g_4^*(\mathcal A)=g_4^*(\mathcal B)=\emptyset$. Therefore, either $|\mathcal A||\mathcal B|<|\mathcal F(n,k,3,1)|^2$ or $g_4^*(\mathcal A)=g_4^*(\mathcal B)=\emptyset$, contradicting the assumption.
This completes the proof under the assumption that $\mathcal A$ and $\mathcal B$ are left-compressed.
However, in this case of $\mathcal A=\mathcal B=\{A \in \binom{[n]}k: T\subseteq A\}$ for some $T\in \binom{[n]}t$, all $\Delta_{ij}(\mathcal{A})=\Delta_{ij}(\mathcal{A})$, $1\le i,j \le n$ and $i\neq j$, are isomorphic.
This means that the left-compression operation $\Delta_{ij}$ does not change the structure of a full $t$-star under isomorphism.
So the pair of extremal families are unique under isomorphism.
\qed

\section*{Acknowledgements}
The authors thank Tao Feng, Jian Wang and Jun Wang for many valuable suggestions and  the anonymous referees' valuable comments to improve our presentation
of the paper.
The first author is  supported by
the National Natural Science Foundation of China  (No.12371332 and
No.11971439);
The second author is supported by the National Natural Science Foundation of China (No. 11901193), the National Natural Science Foundation of Hunan Province, China (No. 2023JJ30385).

\section{Appendix}
Recall that \[
  T(n,k,s,i,t)=\frac{(n-k-s+i)(n-k-i+t)S_1 S_2}{(n-s+1)^2 T_1 T_2}.
\]
\begin{lemma}
For each $(s,i,t)\in \{(7,5,4),(8,5,3),(8,4,3),(7,5,3),(7,4,3)\}$, we have
$T(n,k,s,i,t)>1$.
\end{lemma}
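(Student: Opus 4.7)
The approach would be to mirror the case analysis already carried out for $(s,i,t) = (8,6,5)$ in the body of the proof of Theorem \ref{lemmakeyineq}. For each of the five remaining tuples, I would substitute the specific values into the definitions in \eqref{H1234}, obtaining explicit polynomial expressions for $S_1, S_2, T_1, T_2$ as polynomials in $n$ and $k$ alone. The inequality $T(n,k,s,i,t) > 1$ then becomes a polynomial inequality
$$(n-k-s+i)(n-k-i+t)\, S_1 S_2 \;>\; (n-s+1)^2\, T_1 T_2$$
to be verified under the constraints $n \geq (t+1)(k-t+1)$ and $k \geq s+t-i$ (the latter coming from $s \leq k+i-t$).

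For each tuple I would proceed in two stages. First, handle a small boundary slice of the parameter space directly: for the smallest admissible $k$ (namely $k = s+t-i$), the required inequality becomes a low-degree polynomial inequality in $n$ alone, which can be checked by computing its coefficients and using $n \geq (t+1)(k-t+1)$. Second, for $k$ strictly larger than the minimum, use the splitting technique from the $(8,6,5)$ argument: write $(n-k-s+i)(n-k-i+t)$ or $S_1 S_2$ as a product and replace one factor by a cleaner linear lower bound, yielding two separate quadratic-in-$(n,k)$ inequalities of the form $\alpha n^2 - \beta kn + \gamma n + \delta k^2 + \varepsilon k + \zeta \geq 0$. Under the assumption $n \geq (t+1)(k-t+1)$, each such quadratic can be bounded below by substituting the minimum allowed $n$ and then verifying a one-variable polynomial in $k$ is positive for all $k \geq s+t-i$, either by direct factoring or by checking that both the value at $k = s+t-i$ and the derivative there are nonnegative.

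The main obstacle will be the tuples with the smallest $t$ and the smallest ratio of $k$ to $s+t-i$, namely $(8,4,3)$ (where $k \geq 7$) and $(7,4,3)$ (where $k \geq 6$). In these regimes the margin $T - 1$ is smallest and naive coarse bounds tend to fail, so some of the intermediate splits may need to be tightened (for instance, by choosing a better linear lower bound or by peeling off the boundary values of $k$ one or two at a time before applying a uniform argument). The tuple $(8,5,3)$ with $k = 6$ looks tight as well and might similarly need a direct check for $k = 6$, followed by the generic polynomial argument for $k \geq 7$. In all five cases the structure is the same as the $(8,6,5)$ analysis already presented, so no new ideas are required; the content of the appendix is essentially a uniform but tedious verification, and the only genuine difficulty is keeping the bookkeeping of the polynomial coefficients clean enough that the monotonicity-in-$k$ step closes at the minimum admissible $k$.
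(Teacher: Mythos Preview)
Your proposal is correct and follows essentially the same route as the paper's appendix: substitute the specific $(s,i,t)$ into \eqref{H1234}, compare numerator and denominator of $T$ factor by factor (with a suitable ``splitting'' of $S_1S_2$ into a product of adjusted linear forms where needed), and reduce each factor comparison to a quadratic in $n$ that becomes a one-variable polynomial in $k$ after substituting $n\ge (t+1)(k-t+1)$. One small difference: the paper does not actually need your preliminary boundary-slice stage; in all five cases the factor-by-factor argument already closes for the minimal admissible $k$, provided the splitting coefficients are chosen carefully (the paper uses fractional coefficients such as $3.75k$ and $\tfrac{8}{3}k$ in the $(7,5,4)$ and $(7,4,3)$ cases to make the margin large enough).
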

\begin{proof}
{\bf Case 1}. $(s,i,t)=(8,5,3)$.
\begin{eqnarray*}
 T(n,k,8,5,3)&=&\frac{(n-k-3)(n-k-2)(8n-5k-31)(8n-6k-20)}
{(n-7)^2(5n-2k-22)(6n-4k-16)}.
\end{eqnarray*}
Since $n\geq (3+1)(k-3+1)=4k-8$ and $k\geq s+t-i=6$, we have
\begin{eqnarray*}
&&(n-k-2)(8n-5k-31)-(n-7)(6n-4k-16)\\&=&2n^2-(9k-11)n+(5k^2+13k-50)\\&\geq&2(4k-8)^2-(9k-11)(4k-8)+(5k^2+13k-50)\\&=&k^2+k-10>0
\end{eqnarray*}
and
\begin{eqnarray*}
&&(n-k-3)(8n-6k-20)-(n-7)(5n-2k-22)\\
&=&3n^2-(12k-13)n+(6k^2+24k-94)\\
&\geq&3(4k-8)^2-(12k-13)(4k-8)+(6k^2+24k-94)\\
&=&6k^2-20k-6>0.
\end{eqnarray*}

{\bf Case 2}.   $(s,i,t)=(8,4,3)$.
\begin{eqnarray*}
  T(n,k,8,4,3)&=&\frac{(n-k-4)(n-k-1)(8n-4k-40)(8n-7k-7)}{(n-7)^2(4n-24)(7n-6k-6)}\\&>&\frac{(n-k-4)(n-k-1)(8n-2k-40)(8n-9k-7)}{(n-7)^2(4n-24)(7n-6k-6)}.
\end{eqnarray*}
Since $n\geq (3+1)(k-3+1)=4k-8$ and $k\geq s+t-i=7$,
\begin{eqnarray*}
&&(n-k-1)(8n-2k-40)-(n-7)(7n-6k-6)\\
&=&n^2-(4k-7)n+(2k^2-2)\\&\geq&(4k-8)^2-(4k-7)(4k-8)+(2k^2-2)\\&=&2k^2-4k+6>0
\end{eqnarray*}
and
\begin{eqnarray*}
(n-k-4)(8n-9k-7)-(n-7)(4n-24)&=&4n^2 + (13 - 17k)n + 9k^2 + 43k - 140 \\
&\ge &5k^2 - 25k + 12>0.
\end{eqnarray*}

{\bf Case 3}. $(s,i,t)=(7,5,4)$.

\begin{eqnarray*}
  T(n,k,7,5,4)&=&\frac{(n-k-2)(n-k-1)(7n-5k-17)(7n-6k-6)}{(n-6)^2(5n-3k-13)(6n-5k-5)}\\&>&\frac{(n-k-2)(n-k-1)(7n-3.75k-17)(7n-7.25k-6)}{(n-6)^2(5n-3k-13)(6n-5k-5)}
  \\&>& \frac{(n-k-2)(n-k-1)(7n-3.75k-20)(7n-7.25k-4)}{(n-6)^2(5n-3k-13)(6n-5k-5)}.
\end{eqnarray*}
Since $n\geq (4+1)(k-4+1)=5k-15$, we have
\begin{eqnarray*}
&&(n-k-1)(7n-3.75k-20)-(n-6)(6n-5k-5)\\&=&n^2-(5.75k-14)n+(3.75k^2-6.25k-10)\\&\geq&(5k-15)^2-(5.75k-14)(5k-15)+(3.75k^2-6.25k-10)=5
\end{eqnarray*}
and
\begin{eqnarray*}
&&(n-k-2)(7n-7.25k-4)-(n-6)(5n-3k-13)\\&=&2n^2 -(11.25k-25)n+(7.25k^2+0.5k-70)\\&\ge&2(5k-15)^2-(11.25k-25)(5k-15)+(7.25k^2+0.5k-70)\\&=&k^2 - 5.75 + 5>0.
\end{eqnarray*}
Hence $ T(n,k,7,5,4)>1$.

{\bf Case 4}. $(s,i,t)=(7,5,3)$.
\begin{eqnarray*}
  T(n,k,7,5,3)&=&\frac{(n-k-2)^2(7n-5k-17)^2}{(n-6)^2(5n-3k-13)^2}.
\end{eqnarray*}
Since $n\geq 4k-8$ and $k\geq 5$, we have
\begin{eqnarray*}
&&(n-k-2)(7n-5k-17)-(n-6)(5n-3k-13)\\&=&2n^2-(9k-12)n+5k^2+9k-44\\&\geq&k^2+k-12>0.
\end{eqnarray*}
Hence $T(n,k,7,5,3)>1$.

{\bf Case 5}. $(s,i,t)=(7,4,3)$.
\begin{eqnarray*}
  T(n,k,7,4,3)&=&\frac{(n-k-3)(n-k-1)(7n-4k-26)(7n-6k-6)}{(n-6)^2(4n-k-17)(6n-5k-5)}\\&>&
  \frac{(n-k-3)(n-k-1)(7n-\frac{8}{3}k-26)(7n-\frac{22}{3}k-6)}{(n-6)^2(4n-k-17)(6n-5k-5)}
\end{eqnarray*}
Since $n\geq 4k-8$ and $k\geq 6$, we have
\begin{eqnarray*}
&&(n-k-1)(7n-\frac{8}{3}k-26)-(n-6)(6n-5k-5)\\&=&n^2-(4\frac{2}{3}k-8)n+(\frac{8}{3}k^2-\frac{4}{3}k-4)\\&\geq&
4k-4\geq 0
\end{eqnarray*}
and
\begin{eqnarray*}
&&(n-k-3)(7n-\frac{22}{3}k-6)-(n-6)(4n-k-17)\\&=&3n^2-(\frac{40}{3}k-14)n+(\frac{22}{3}k^2+22k+1)\\&\geq&
3(4k-8)^2-(\frac{40}{3}k-14)(4k-8)+(\frac{22}{3}k^2+22k+1)
\\&=&2k^2-\frac{22k}{3}-4>0.
\end{eqnarray*}
Hence $ T(n,k,7,4,3)>1$.
\end{proof}
\qed

\begin{thebibliography}{99}

\bibitem{AK1996} R. Ahlswede and L. H. Khachatrian. The Complete Nontrivial-Intersection Theorem for Systems of Finite Sets, J. Combin. Theory Ser. A 76 (1996) 121--138.

\bibitem{AK} R. Ahlswede, L. H. Khachatrian. The complete intersection theorem for systems of finite
sets, European J. Combin. 18(2) (1997) 125--136.


\bibitem{Borg2014} P. Borg. The maximum product of sizes of cross-$t$-intersecting uniform families, Australasian Journal of Combinatorics 60(1) (2014) 69--78.

\bibitem{Borg2016} P. Borg. The maximum product of weights of cross-intersecting families, Journal of the London Mathematical Society 94 (2016) 993-1018.



\bibitem{EKR1961}
P. Erd\H{o}s, C. Ko, R. Rado.
Intersection theorems for systems of finite sets,
Q. J. Math. Oxf. 2(12) (1961) 313--320.


\bibitem{Fra1976}
P. Frankl. The Erd\H{o}s--Ko--Rado theorem
is true for $n=ckt$. in:
Combinatorics (Proc. Fifth Hungarian Colloq., Keszthely, 1976), Vol. I, pp. 365--375,
Colloq. Math. Soc. Janos Bolyai, 18, North-Holland, Amsterdam-New York, 1978.

\bibitem{Fra1987}
P. Frankl. The shifting technique in extremal set theory. in: C. Whitehead (Ed.), Surveys in Combinatorics,
LMS Lecture Note Series, vol.123,
Cambridge Univ. Press, 1987, pp. 81--110.

\bibitem{FF}
P. Frankl, Z. F\"uredi. Non-trivial intersecting families,  J. Combin. Theory Ser. A
{ 41} (1986) 150--153.








\bibitem{FLST}
P. Frankl, S. J. Lee, M. Siggers and N. Tokushige. An Erdos-Ko-Rado theorem for cross t-intersecting families, J. Combin. Theory Ser. A 128 (2014) 207--249.






%
%
%
%

 \bibitem{MT1989}
 M. Matsumoto, N. Tokushige.
The exact bound in the Erd\H{o}s-Ko-Rado theorem
for cross-intersecting families,
J. Combin. Theory Ser. A 52 (1989) 90--97.




\bibitem{Pyber} L. Pyber. A new generalization of the Erd\H{o}s-Ko-Rado theorem, J. Combin. Theory Ser. A 43 (1986) 85--90.

\bibitem{Sperner} E. Sperner.  Ein satz {\"u}ber untermengen einer endlichen Menge,
Math. Z. 27 (1928) 544--5488.

\bibitem{Tokushige2013} N. Tokushige. The eigenvalue method for cross-$t$-intersecting families,  Journal of Algebraic Combinatorics, 38(3) (2013)
653--662.



\bibitem{wz2} J. Wang and H. J. Zhang. Nontrivial independent sets of bipartite graphs and cross-intersecting families,
 J. Combin. Theory Ser. A 120 (2013) 129--141.

\bibitem{Wil1984}
R. M. Wilson. The exact bound in the Erd\H{o}s--Ko--Rado theorem, Combinatorica 4 (1984) 247--257.

\end{thebibliography}
\end{document}